\newtheorem{theorem}{Theorem}[section]
\newtheorem{corollary}[theorem]{Corollary}
\newtheorem{lemma}[theorem]{Lemma}
\theoremstyle{definition}
\newtheorem{definition}[theorem]{Definition}
\theoremstyle{definition}
\theoremstyle{definition}
\newtheorem{example}[theorem]{Example}
\newcommand{\Z}{\mathbb{Z}}
\newcommand{\F}{\mathbb{F}}
\newcommand{\comment}[1]{}
\numberwithin{equation}{section}
\begin{document}
	\date{}
	\title{$\mathrm{ EA}(q)$-additive Steiner 2-designs}
	\author{Marco Buratti \thanks{Sapienza Universit\`a di Roma,  Dipartimento di Scienze di Base e Applicate per l'Ingegneria (SBAI), Via Antonio Scarpa 14, 00161 Roma, Italy, email: marco.buratti@uniroma1.it} \and
	Mario Galici \thanks{Dipartimento di Matematica e Fisica ``Ennio de Giorgi", Universit\`a del Salento, Lecce, Italy, email: mario.galici@unisalento.it}  \and
	Alessandro Montinaro
	\thanks{Dipartimento di Matematica e Fisica ``Ennio de Giorgi", Universit\`a del Salento, Lecce, Italy,  email: alessandro.montinaro@unisalento.it}  \and
	Anamari Naki\'c\thanks{University of Zagreb, Faculty of Electrical Engineering and Computing, Unska 3, 10000 Zagreb, Croatia, email: anamari.nakic@fer.unizg.hr}  \and
	Alfred Wassermann\thanks{Department of Mathematics, University of Bayreuth, D-95440 Bayreuth,
	Germany, email: alfred.wassermann@uni-bayreuth.de}}

	\maketitle

\begin{abstract}
A design is $G$-additive with $G$ an abelian group, if its points are in $G$ and each block is zero-sum in $G$. 
All the few known ``manageable" additive Steiner 2-designs are $\mathrm{EA}(q)$-additive for a suitable $q$, where $\mathrm{EA}(q)$ is the elementary abelian group of order $q$. We present some general constructions for $\mathrm{EA}(q)$-additive Steiner 2-designs which unify the known ones and allow to find a few new ones: an additive $\mathrm{EA}(2^8)$-additive 2-$(52,4,1)$ design which is also resolvable, and three pairwise non-isomorphic
$\mathrm{EA}(3^5)$-additive 2-$(121,4,1)$ designs, none of which is the point-line design of $\mathrm{PG}(4,3)$.
In the attempt to find also an $\mathrm{EA}(2^9)$-additive 2-$(511,7,1)$ design, we prove that a putative
2-analog of a 2-$(9,3,1)$ design cannot be cyclic.
\end{abstract}
\noindent \small{\bf Keywords:} \scriptsize
additive design; Steiner 2-design; cyclic design; 1-rotational design; $q$-analog of a classic design; Segre variety.

\normalsize
\eject

\section{Introduction}
A 2-$(v,k,\lambda)$ {\it design} is a pair $(V,{\cal B})$ where $V$ is a $v$-set whose elements are called {\it points}
and $\cal B$ is a collection of $k$-subsets of $V$ called {\it blocks} such that every pair of distinct points is contained
in precisely $\lambda$ blocks. To avoid trivial designs, it is assumed that $v>k>2$.
In the special case that $\lambda$ is 1 the design is said to be a {\it Steiner $2$-design}.
Throughout the paper, we will write ``$(v,k,\lambda)$-design" rather than ``2-$(v,k,\lambda)$ design".
An {\it automorphism} of a design $(V,{\cal B})$ is a permutation on $V$ leaving $\cal B$ invariant.
A design is said to be {\it cyclic} if it admits an automorphism cyclically permuting all its points, whereas it
is said to be {\it $1$-rotational} if it admits an automorphism fixing one point and cyclically permuting all the others.
Finally, a $(v,k,\lambda)$-design $(V,{\cal B})$ is said to be {\it resolvable} if there exists a partition 
of $\cal B$ ({\it resolution}) into classes ({\it parallel classes}) each of which is a partition of $V$.

The nice notion of an additive design has been introduced in \cite{CFP}.
A $(v,k,\lambda)$-design is said to be {\it additive} if, up to isomorphism, its
points are elements of an abelian group $G$ and its blocks are all zero-sum in $G$.
As in \cite{BN1}, a design as above will be said $G$-additive.

The characteristics of additive designs make them highly elegant combinatorial objects. It is worth noting that the 
use of zero-sum blocks in the construction of combinatorial designs is a common occurrence 
(see, e.g., \cite{BCHW, BH, BP, C, EW, K, PD, WW}). In addition, the motivation to examine and construct additive designs comes from their significant connections with various areas of discrete mathematics, including coding theory and additive combinatorics.


Throughout the paper $\mathrm{EA}(q)$ will denote the elementary abelian group of order $q$
which, of course, can be seen as the additive group of $\F_q$, the field of order $q$.
We denote by $\mathrm{AG}(n,q)$ and $\mathrm{PG}(n,q)$ the $n$-dimensional affine and, respectively, projective geometry 
over $\F_q$.
A {\it $q$-analog of a $(v,k,\lambda)$-design}
is a $({q^v-1\over q-1},{q^k-1\over q-1},\lambda)$-design
with point-set $V=PG(v-1,q)$ whose blocks are $(k-1)$-dimensional subspaces of $V$.
It has been proved in \cite{BN2} that every such design is $\mathrm{EA}(q^v)$-additive.

Additive designs with $\lambda>1$ are not so rare (see, e.g., \cite{BN0,BMN,CF,CFP2,N,P,T}).
On the other hand, the additive $(v,k,1)$-designs appear to be ``precious".
Indeed, the few known results on additive Steiner 2-designs can be summarized as follows.
\begin{theorem}\label{summary} The following facts are known.
\begin{enumerate}
\item[(i)] The point-line design associated with $\mathrm{AG}(n,q)$ is $\mathrm{EA}(q^n)$-additive;
\item[(ii)] the point-line design associated with $\mathrm{PG}(n,q)$ is $\mathrm{EA}(q^{n+1})$-additive (see \cite{CFP,BN2});
\item[(iii)] the well celebrated $2$-analogs of a $(13,3,1)$ design \cite{BEOVW} are
$\mathrm{EA}(2^{13})$-additive $(8191,7,1)$-designs;
\item[(iv)] there exists an $\mathrm{EA}(5^3)$-additive $(124,4,1)$-design (see \cite{BMN}).
\item[(v)] if $k$ is neither singly even nor of the form $2^n3$, then there exists a $G\times EA(q)$-additive $(kq,k,1)$-design 
for a suitable zero-sum group $G$ of order $k$ and a suitable power $q$ of a prime divisor of $k$  (see \cite{BN1}).
\end{enumerate}
\end{theorem}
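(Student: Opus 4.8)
The plan is to read the statement as a compilation of facts of unequal depth, and to treat them accordingly. Items (ii), (iv) and (v) are \emph{verbatim} the results of \cite{CFP,BN2}, \cite{BMN} and \cite{BN1} respectively, so for these there is nothing to do beyond quoting the sources. Item (iii) I would obtain by combining two facts that are already available: the (computer-assisted) existence of a $2$-analog of a $(13,3,1)$ design established in \cite{BEOVW}, together with the general principle recalled above that \emph{every} $q$-analog of a $(v,k,\lambda)$-design is $\mathrm{EA}(q^{v})$-additive (proved in \cite{BN2}). Specializing the latter to $q=2$, $v=13$, $k=3$, $\lambda=1$ yields an $\mathrm{EA}(2^{13})$-additive design on $\tfrac{2^{13}-1}{2-1}=8191$ points with block size $\tfrac{2^{3}-1}{2-1}=7$ and $\lambda=1$, exactly as asserted. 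Hence the only item calling for an independent, self-contained argument is (i).

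To prove (i) I would realize the point-line design of $\mathrm{AG}(n,q)$ directly inside the group $\mathrm{EA}(q^{n})$ by identifying its points with the vectors of $\F_q^{n}$. A line of $\mathrm{AG}(n,q)$ is a coset $a+\langle d\rangle=\{a+td : t\in\F_q\}$ with $d\neq 0$, and its block has size $k=q$. I would then compute the point-sum of this block in the additive group of $\F_q^{n}$:
\begin{equation*}
\sum_{t\in\F_q}(a+td)=q\,a+\Bigl(\sum_{t\in\F_q}t\Bigr)d .
\end{equation*}
Since $q$ is a power of the characteristic $p$, we have $q\,a=0$; and since the convention $v>k>2$ forces $k=q>2$, the sum of all elements of $\F_q$ vanishes, $\sum_{t\in\F_q}t=0$. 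Therefore every line is zero-sum, which is precisely the assertion that this design is $\mathrm{EA}(q^{n})$-additive.

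I do not expect any real obstacle in the self-contained part (i): it reduces to the elementary observation that $\sum_{t\in\F_q}t=0$ for $q>2$. The genuinely hard ingredients live inside the cited references rather than in this argument. If one wanted to reprove (ii) from scratch --- the case I would single out as the main difficulty --- the naive ``first nonzero coordinate equal to $1$'' normalization of projective points fails to be zero-sum on a generic line, so one must instead exhibit a \emph{globally consistent} system of representatives of the one-dimensional subspaces of $\F_q^{n+1}$ whose sum over the $q+1$ points of every line vanishes; constructing and verifying such a representative system is exactly the technical core supplied by \cite{CFP,BN2}.
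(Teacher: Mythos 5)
Your proposal is correct and matches the paper's treatment: the paper states this theorem purely as a survey of known facts backed by the citations, exactly as you do for (ii)--(v), and your derivation of (iii) from the existence result of \cite{BEOVW} plus the general $\mathrm{EA}(q^v)$-additivity of $q$-analogs from \cite{BN2} is the intended reading. Your self-contained coset computation for (i) --- $\sum_{t\in\F_q}(a+td)=qa+\bigl(\sum_{t\in\F_q}t\bigr)d=0$ for $q>2$ --- is a correct and welcome verification of the one item the paper leaves uncited.
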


Result $(v)$ is important from a theoretical point of view since, for the time being, it is
the only one assuring the existence of an additive $(v,k,1)$-design for some
values of $v$ when $k$ is not of the forbidden form and neither a prime power nor a prime power plus one. On the other hand, it is not practical at
all since it leads to very cumbersome designs. Indeed, the mentioned ``suitable" power $q$ has 
to be really huge in comparison with $k$.
Thus, the known ``manageable"  additive Steiner 2-designs are all $\mathrm{EA}(q)$-additive
for a suitable prime power $q$.
Inspired by this, here we want to investigate further the $\mathrm{EA}(q)$-additive $(v,k,1)$-designs.
In the next section, we first establish which are the admissible prime powers $q$
and then we give two theoretical constructions for $\mathrm{EA}(q)$-additive $(v,k,1)$-designs
which are either cyclic with point-set the $v$-th roots of unity of $\F_q$ with $q\equiv1$ (mod $v$) admissible, 
or 1-rotational with point-set the $(v-1)$-th roots of unity of $\F_q$ and 0 with $q\equiv1$ (mod $v-1$) admissible.
These constructions are put in practice in the subsequent two sections where
we find a 1-rotational $\mathrm{EA}(2^8)$-additive $(52,4,1)$-design which is also resolvable, and three pairwise non-isomorphic
cyclic $\mathrm{EA}(3^5)$-additive $(121,4,1)$-designs,  none of which is the point-line design of $\mathrm{PG}(4,3)$.
In the last section, we discuss the possible existence of cyclic $\mathrm{EA}(2^n)$-additive $(2^n-1,k,1)$-designs.
At the moment they are known to exist only when $k=3$ (they come from Theorem \ref{summary} $(ii)$) and when 
$(n,k)=(13,7)$ in view of Theorem \ref{summary} $(iii)$.
The least possible unknown case is $(n,k)=(9,7)$: does there exist a cyclic $\mathrm{EA}(2^9)$-additive $(511,7,1)$-design? 
Unfortunately, we are still not able to answer this question. If it exists, however, it cannot be the 2-analog of a
$(9,3,1)$-design. We prove this by means of two different approaches: the Kramer-Mesner method and a geometrical method.

\section{Some general results}

Here we give some general results on the $\mathrm{EA}(q)$-additive $(v,k,1)$-designs
starting with a useful restrictive condition on the possible prime powers $q$. 

\begin{theorem}\label{admissible}
If there exists an $\mathrm{EA}(q)$-additive $(v,k,1)$ design, then $q$ is a power of a
prime  divisor of ${v-k\over k-1}$.
\end{theorem}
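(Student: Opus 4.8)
The plan is to exploit the zero-sum condition through a counting argument that is localized at a single point rather than summed over the whole design. Write $G=\mathrm{EA}(q)=(\F_q,+)$, let $p$ be its characteristic so that $q=p^e$, and set $S=\sum_{x\in V}x$, the sum in $\F_q$ of all the points of the design. Since $\lambda=1$, through a fixed point $x_0$ there pass exactly $r=\frac{v-1}{k-1}$ blocks, and two distinct such blocks can share no point other than $x_0$ (two common points would lie in two common blocks); hence the blocks through $x_0$ partition the remaining $v-1$ points into $r$ classes of size $k-1$.

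Next I would sum the zero-sum relation over the $r$ blocks incident with $x_0$. Because each such block $B$ satisfies $\sum_{y\in B}y=0$, the total vanishes; regrouping the contribution point by point, $x_0$ is counted once for each of the $r$ blocks, while every other point $y$ occurs in exactly one block together with $x_0$. This yields
$$r\,x_0+\sum_{y\neq x_0}y=r\,x_0+(S-x_0)=0,$$
that is $(r-1)x_0=-S$ in $\F_q$. The crucial feature is that this identity holds for \emph{every} point $x_0$, while the right-hand side $-S$ does not depend on $x_0$.

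Subtracting the relation for two distinct points $x_0\neq x_1$ then gives $(r-1)(x_0-x_1)=0$ in $\F_q$. Since $r=\frac{v-1}{k-1}$ is the (integral) replication number, $r-1=\frac{v-k}{k-1}$ is an integer, and because $v>k>2$ the design certainly contains two distinct points, so $x_0-x_1$ is a nonzero element of $\F_q$. In a field an integer multiple $m\cdot a$ of a nonzero element $a$ vanishes precisely when the characteristic $p$ divides $m$; applying this with $m=r-1$ forces $p\mid\frac{v-k}{k-1}$, and as $q=p^e$ this is exactly the claim.

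The argument is short once the right object is identified, so I do not expect a serious obstacle; the only point needing care is the decision to localize at a fixed point. Summing the zero-sum condition over all blocks would only give $r\,S=0$, which is too weak. The gain comes from summing over the pencil of blocks through $x_0$, where the $\lambda=1$ property guarantees that every other point is counted exactly once, producing an $x_0$-dependent relation that can then be differenced away.
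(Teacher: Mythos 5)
Your proposal is correct and follows essentially the same route as the paper: both localize at a single point, use the fact that the $r$ blocks through it partition the remaining points, derive the identity $(1-r)x=\sigma$ (your $(r-1)x_0=-S$) for every point $x$, and difference it over two points to conclude that the characteristic divides $r-1=\frac{v-k}{k-1}$. No gaps.
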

\begin{proof}
Recall that the number of blocks through any point of a $(v,k,1)$-design is the constant $r:={v-1\over k-1}$
known as the {\it replication number}.
Let $x$ be any point and let $B_1$, \dots, $B_r$ be the blocks through $x$.
Obviously, the singleton $\{x\}$ and the $r$ sets $B_1\setminus\{x\}$, \dots, $B_r\setminus\{x\}$ form a partition of the point set.
Considering that each $B_i$ is zero-sum, the sum of all elements of $B_i\setminus\{x\}$ is equal to $-x$ for every $i$. Then the
sum $\sigma$ of all the points is equal to $x+r(-x)=(1-r)x$. Thus we have $\sigma=(1-r)x$ for every point $x$. 
It follows that $(1-r)(x-y)=0$ for any pair of points $x$ and $y$, hence the order of any difference of two points
is a divisor of $r-1={v-k\over k-1}$. The assertion follows considering that the order of any non-zero element of $\F_q$ 
is the characteristic of $\F_q$. 
\end{proof}

The prime powers satisfying the condition of Theorem \ref{admissible} for a given pair $(v,k)$ will be said {\it admissible}.

\medskip
It is known that a cyclic $(v,k,1)$-design has necessarily $v\equiv 1$ or $k$ (mod $k(k-1))$,
whereas a 1-rotational $(v,k,1)$ design has necessarily $v\equiv k-1$ (mod $k(k-1))$.
In order to explain how to construct cyclic and 1-rotational Steiner 2-designs we need to recall the
notion of a difference family.

Given a set ${\cal F}=\{B_1,\dots,B_n\}$ of subsets of a group $G$, the {\it list of differences} of $\cal F$
is the multiset $\Delta{\cal F}$ consisting of all possible differences $x-y$ (if $G$ is additive) or all possible
ratios $xy^{-1}$ (if $G$ is multiplicative), with $(x,y)$ an ordered pair of distinct elements from the same $B_i$.

Given a subgroup $H$ of a group $G$, a $(G,H,k,\lambda)$ {\it difference family} is a set of $k$-subsets (called {\it base blocks})
of $G$ whose list of differences is precisely $\lambda$ times the set $G\setminus H$ (see \cite{B}). 

Speaking of a {\it cyclic $(G,k,1)$ difference family} we will mean a $(G,H,k,1)$ difference family where $G$ is a cyclic group
of order $v\equiv1$ or $k$ (mod $k(k-1))$ and $H$ is the subgroup of $G$ of order $1$ or $k$, respectively.

Speaking of a {\it $1$-rotational $(G,k,1)$ difference family} we will mean a $(G,H,k,1)$ difference family where $G$ is a cyclic group
of order $v\equiv k-1$ (mod $k(k-1))$ and $H$ is the subgroup of $G$ of order $k-1$.

The above terminology is justified by the following well-known result.
\begin{theorem}\label{designs via DFs} The following facts hold.
\item[(i)] Up to isomorphism, the cyclic $(v,k,1)$-designs with $v\equiv1$ (mod $k(k-1))$ are precisely those of the form $(G,{\cal B})$, where $G$ is a cyclic group of order $v$ and $\cal B$ is the set of all possible translates (under $G$) of the base blocks of a cyclic $(G,k,1)$ difference family.
\item[(ii)] Up to isomorphism, the cyclic $(v,k,1)$-designs with $v\equiv k$ (mod $k(k-1))$ are precisely 
those of the form $(G,{\cal B})$, where $G$ is a cyclic group of order $v$
and $\cal B$ is the set of all possible translates (under $G$) of the base blocks of a cyclic $(G,k,1)$ difference family and all possible cosets
of the subgroup of $G$ of order $k$.
\item[(iii)] Up to isomorphism, the $1$-rotational $(v,k,1)$-designs are precisely those of the form $(G\cup\{\infty\},{\cal B})$, where $G$ is a cyclic group of order $v-1$,
$\infty$ is a symbol not in $G$, and $\cal B$ is the set of all possible translates (under $G$) of the base blocks of a $1$-rotational $(G,k,1)$ difference 
family and all possible sets of the form $C\cup\{\infty\}$, with $C$ a coset of the subgroup of $G$ of order $k-1$.
\end{theorem}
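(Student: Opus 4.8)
The plan is to establish both directions of each equivalence by a standard orbit analysis of the translation action of $G$, with the list of differences as the main bookkeeping tool. I first record the elementary counting fact on which everything rests: for a $k$-subset $B$ of $G$ and a pair $\{a,b\}$ of distinct points, the translates $B+g$ that contain $\{a,b\}$ correspond bijectively to the ordered pairs $(x,y)$ of distinct elements of $B$ with $x-y=a-b$. Hence the number of blocks through $\{a,b\}$ in the development of a family ${\cal F}$ equals the multiplicity of $a-b$ in $\Delta{\cal F}$, which for a $(G,H,k,1)$ difference family is $1$ if $a-b\in G\setminus H$ and $0$ otherwise.

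The forward implications are then routine. In case (i), where $H=\{0\}$, every pair has nonzero difference and so lies in exactly one developed block, giving a cyclic $(v,k,1)$-design. In case (ii) the developed base blocks cover precisely the pairs with $a-b\notin H$; a pair with $a-b\in H\setminus\{0\}$ lies in no such block but in exactly one coset of $H$ (the common coset of its two points), so adjoining all cosets of $H$ completes the design. In case (iii) the developed base blocks cover the pairs inside $G$ with difference outside $H$, while each block $C\cup\{\infty\}$ accounts both for the pairs inside the coset $C$ (difference in $H\setminus\{0\}$) and, since every $g\in G$ lies in a unique coset, for every pair $\{g,\infty\}$. In all three cases the prescribed cyclic or $1$-rotational automorphism is visible by construction.

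For the converse I would decompose $\cal B$ into $G$-orbits and pin down the block stabilizers. The key global identity is that in any such design every nonzero $d\in G$ occurs exactly $|G|$ times in the combined difference lists of the blocks lying in $G$, there being exactly $|G|$ ordered pairs of these points with difference $d$, each contained in a unique block. Now suppose a block $B\subseteq G$ with $|B|=k'$ has stabilizer $S$ of order $s>1$; then $B$ is a union of $S$-cosets, so $s\mid k'$, and each $d\in S\setminus\{0\}$ occurs exactly $k'$ times in $\Delta B$. The orbit of $B$, of length $|G|/s$, therefore contributes $k'\,|G|/s$ to the count of such a $d$, and comparison with the total $|G|$ forces $k'/s\le 1$, i.e.\ $s=k'$ and $B$ a single coset of the subgroup of order $k'$. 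This rigidity is the heart of the argument; the three cases are then separated by divisibility. In case (i) one has $\gcd(v,k)=1$, so $s\mid\gcd(v,k)=1$ and all orbits are full; the orbit representatives form the difference family and reversing the counting gives $\Delta{\cal F}=G\setminus\{0\}$. In case (ii) one has $k\mid v$, the subgroup $H$ of order $k$ exists, every short orbit is the orbit of a coset of $H$ (which already covers each $d\in H\setminus\{0\}$ exactly $v$ times), and since $(v-1)/(k(k-1))$ is not an integer not all orbits can be full; hence the coset orbit of $H$ occurs and the remaining full orbits give a $(G,H,k,1)$ difference family. In the $1$-rotational case the same bound applied to $C=B\setminus\{\infty\}$ forces every block through $\infty$ to be $(\text{coset of }H)\cup\{\infty\}$ with $|H|=k-1$; these partition the pairs $\{g,\infty\}$ and already cover $H\setminus\{0\}$ fully, while for a block inside $G$ the bound would require $s=k$, impossible since $k\nmid v-1$ in the relevant congruence class, so the remaining orbits are full and yield a $1$-rotational difference family.

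The step I expect to be most delicate is exactly this stabilizer analysis in the converse: showing that the only admissible short orbits are those of the subgroup cosets, and that in case (ii) such an orbit is actually forced. The divisibility facts $\gcd(v,k)=1$, $k\mid v$, and $k\nmid v-1$ attached to the three congruence classes of $v$ modulo $k(k-1)$ are precisely what make the orbit structure rigid and the correspondences bijective.
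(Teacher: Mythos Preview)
Your proof sketch is correct and follows the standard orbit--stabilizer approach to this classical correspondence. The paper, in fact, does not prove this theorem at all: it is introduced as a ``well-known result'' and stated without argument, so there is nothing in the paper to compare your proof against beyond noting that the authors take the statement for granted.

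One small point worth tightening in your converse for case~(iii): your stabilizer bound $s=k'$ is derived under the hypothesis $s>1$, so when you apply it to $C=B\setminus\{\infty\}$ you should also explicitly rule out $s=1$. This is immediate, since a full orbit of length $v-1$ consisting of blocks through $\infty$ would exceed the total number $r=(v-1)/(k-1)$ of such blocks as soon as $k>2$; but it deserves a sentence. With that addition, the orbit analysis is complete, and the divisibility facts you isolate ($\gcd(v,k)=1$ in~(i), $k\mid v$ together with $(v-1)/(k(k-1))\notin\mathbb{Z}$ in~(ii), and $k\nmid v-1$ in~(iii)) do exactly the work you claim in pinning down the short orbits.
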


From now on, $\F_q^*$ will denote the multiplicative group of $\F_q$.
Also, if $q\equiv 1$ (mod $n$), we denote by $R_{q,n}$ the subgroup of $\F_q^*$ of order $n$,
that is the group of $n$-th roots of unity in $\F_q$. 

The following theorems may be useful to find new additive Steiner 2-designs which are cyclic
or 1-rotational.

\begin{theorem}\label{CA}
Let $q=nv+1$ be a prime power and assume that
there exists a cyclic $(R_{q,v},k,1)$ difference family whose base blocks are all zero-sum
in $\F_q$. Then there exists an $\mathrm{EA}(q)$-additive $(v,k,1)$ design.
\end{theorem}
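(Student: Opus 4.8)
The plan is to build the design explicitly, taking as point set the group $R_{q,v}$ of $v$-th roots of unity sitting inside $\F_q$, and then to show that the cyclic $(v,k,1)$-design manufactured from the given difference family via Theorem~\ref{designs via DFs} automatically has all of its blocks zero-sum in $(\F_q,+)=\mathrm{EA}(q)$. First I would invoke Theorem~\ref{designs via DFs}: the hypothesis provides a cyclic $(R_{q,v},k,1)$ difference family ${\cal F}=\{B_1,\dots,B_n\}$, where $R_{q,v}$ is a cyclic group of order $v$ written multiplicatively. According to whether $v\equiv1$ or $v\equiv k\pmod{k(k-1)}$, part (i) or part (ii) of that theorem yields a cyclic $(v,k,1)$-design on the point set $R_{q,v}$ whose blocks are the multiplicative translates $\omega B_i$ with $\omega\in R_{q,v}$, together with, in the second case, all cosets $\omega R_{q,k}$ of the subgroup $R_{q,k}$ of order $k$. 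Identifying each point of $R_{q,v}$ with the corresponding element of $\F_q$, it then remains only to check that every block is zero-sum in $\F_q$.

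For a translate block the essential observation is that the block sum factors through the field multiplication: $\sum_{b\in B_i}\omega b=\omega\sum_{b\in B_i}b$. By hypothesis each base block is zero-sum, so $\sum_{b\in B_i}b=0$ and hence $\sum_{b\in\omega B_i}b=\omega\cdot0=0$. Thus all translates of the base blocks are zero-sum in $\F_q$, which already settles the case $v\equiv1\pmod{k(k-1)}$, where these are the only blocks.

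For the case $v\equiv k\pmod{k(k-1)}$ one must additionally dispose of the coset blocks, and this is the only point needing a genuinely new ingredient. Here $k\mid v\mid q-1$, so $R_{q,k}$ is exactly the set of $k$-th roots of unity, i.e.\ the $k$ distinct roots of $x^k-1$ in $\F_q$ (distinct since $\gcd(k,q)=1$). By Vieta's formulas the sum of these roots equals minus the coefficient of $x^{k-1}$, which is $0$ for $k\geq2$; hence $\sum_{\zeta\in R_{q,k}}\zeta=0$, and as before every coset has sum $\omega\cdot0=0$. With all blocks now zero-sum, the design is $\mathrm{EA}(q)$-additive, as claimed. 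The main thing to be careful about is the bookkeeping between the multiplicative structure of $R_{q,v}$ (in which the translates and the difference-family condition live) and the additive structure of $\mathrm{EA}(q)$ (in which the zero-sum condition lives); the distributive law is precisely what bridges the two.
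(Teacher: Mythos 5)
Your proposal is correct and follows essentially the same route as the paper: apply Theorem~\ref{designs via DFs} in the two congruence cases, use distributivity to see that multiplicative translates of zero-sum base blocks remain zero-sum, and observe that the subgroup $R_{q,k}$ is itself zero-sum. The only cosmetic difference is that you prove the last fact directly via Vieta's formulas for $x^k-1$, where the paper simply cites the known fact that every non-trivial subgroup of $\F_q^*$ is zero-sum.
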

\begin{proof}
Let $\cal F$ be a difference family as in the assumption. Given that $R_{q,v}$ has order $v$, by the
definition of a cyclic difference family we necessarily have $v\equiv 1$ or $k$ (mod $k(k-1))$.

Consider first the case $v\equiv1$ (mod $k(k-1))$.
By Theorem \ref{designs via DFs}(i), any block of the cyclic $(v,k,1)$-design generated by $\cal F$ is of the form
$Bg$ with $B\in{\cal F}$ and $g\in R_{q,v}$. Then, considering that $B$ is zero-sum in $\F_q$ by assumption, this block
$Bg$ is zero-sum as well. 

Now consider the case $v\equiv k$ (mod $k(k-1))$.
By Theorem \ref{designs via DFs}(ii), any block of the cyclic $(v,k,1)$-design generated by $\cal F$ is either of the form
$Bg$ with $B\in{\cal F}$ and $g\in R_{q,v}$, or of the form $Hg$ with $H=R_{q,k}$ the subgroup of $R_{q,v}$ of order $k$ and $g\in R_{q,v}$. 
Both $Bg$ and $Hg$ are zero-sum in $\F_q$ since $B$ is zero-sum in $\F_q$ by assumption, and $H$ is also zero-sum since it is
known that any non-trivial subgroup of $\F_q^*$ is zero-sum. 

Thus, in any case, all the blocks of the design generated by $\cal F$ are zero-sum and the assertion follows.
\end{proof}

Similarly, we have the following.

\begin{theorem}\label{1rA}
Let $q=n(v-1)+1$ be a prime power and assume that
there exists a $1$-rotational $(R_{q,v-1},k,1)$ difference family whose base blocks are all zero-sum
in $\F_q$. Then there exists an $\mathrm{EA}(q)$-additive $(v,k,1)$-design.
\end{theorem}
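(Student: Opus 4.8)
The plan is to mirror the proof of Theorem~\ref{CA} almost verbatim, adapting it to the $1$-rotational setting governed by Theorem~\ref{designs via DFs}(iii). The key structural fact I would rely on is that the point set of the resulting design is $R_{q,v-1}\cup\{\infty\}$, where the symbol $\infty$ plays the role of the zero element $0$ of $\F_q$. That is, I would identify $\infty$ with $0\in\F_q$, so that the design has point set $R_{q,v-1}\cup\{0\}\subseteq\F_q$, all points lying in the additive group $\mathrm{EA}(q)$. The whole proof then reduces to checking that every block is zero-sum in $\F_q$.

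First I would invoke Theorem~\ref{designs via DFs}(iii) to enumerate the two types of blocks. The first type consists of the translates (under the multiplicative action of $R_{q,v-1}$) of the base blocks of the given $1$-rotational difference family: these are of the form $Bg$ with $B\in{\cal F}$ and $g\in R_{q,v-1}$. Since each base block $B$ is zero-sum in $\F_q$ by hypothesis, each scalar multiple $Bg$ has sum $g\cdot\sum_{b\in B}b=g\cdot 0=0$, so these blocks are zero-sum. The second type consists of the blocks $C\cup\{\infty\}$, where $C$ is a coset of the subgroup $H=R_{q,k-1}$ of order $k-1$ in $G=R_{q,v-1}$; under the identification $\infty=0$, such a block becomes $C\cup\{0\}$.

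The remaining point is to verify that each block $C\cup\{0\}$ of the second type is zero-sum in $\F_q$. Since the sum of $0$ contributes nothing, this amounts to showing that the coset $C=Hg$ is zero-sum. Writing $\sum_{h\in H}hg = g\cdot\sum_{h\in H}h$, it suffices to show that the subgroup $H=R_{q,k-1}$ is itself zero-sum in $\F_q$. This is exactly the same ingredient used in the proof of Theorem~\ref{CA}: any non-trivial subgroup of $\F_q^*$ is zero-sum (here I use that $k-1>1$, which holds because $k>2$ in any design). Hence $\sum_{h\in H}h=0$ and therefore $C$ is zero-sum, making $C\cup\{0\}$ zero-sum as well.

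Since both types of blocks are zero-sum in $\F_q$ and all points lie in $\mathrm{EA}(q)$, the $1$-rotational $(v,k,1)$-design generated by $\cal F$ is $\mathrm{EA}(q)$-additive, which is the assertion. I do not anticipate a serious obstacle: the only subtlety, and the one point deserving explicit comment, is the identification of the fixed point $\infty$ with $0\in\F_q$, which is what allows the $1$-rotational design to be realized with points in the additive group while keeping the blocks $C\cup\{\infty\}$ zero-sum. Everything else is a direct transcription of the argument for Theorem~\ref{CA}, with the multiplicative group $R_{q,v}$ replaced by $R_{q,v-1}$ and the extra fixed point accounted for by the zero element.
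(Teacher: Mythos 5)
Your proof is correct and follows essentially the same route as the paper's: identify the fixed point $\infty$ with $0\in\F_q$, apply Theorem~\ref{designs via DFs}(iii) to enumerate the two block types, and check that each is zero-sum using the hypothesis on the base blocks together with the fact that the non-trivial subgroup $R_{q,k-1}$ of $\F_q^*$ is zero-sum. The paper compresses the verification into the phrase ``reasoning as in Theorem~\ref{CA},'' whereas you spell it out explicitly, but the content is identical.
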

\begin{proof}
Let $\cal F$ be a difference family as in the assumption. Given that $R_{q,v-1}$ has order $v-1$, by the
definition of a 1-rotational difference family we necessarily have $v\equiv k-1$ (mod $k(k-1))$.

Let us use $\cal F$ to construct a 1-rotational $(v,k,1)$-design as indicated in Theorem \ref{designs via DFs}(iii) 
by taking as extra point $\infty$ the zero of $\F_q$.
Thus any block of this design is of the form $Bg$ with $B\in{\cal F}$ and $g\in R_{q,v-1}$, or of the form $Hg\cup\{0\}$ with 
$H=R_{q,k-1}$ the subgroup of $R_{q,v-1}$ of order $k-1$ and $g\in R_{q,v-1}$. 
Reasoning as in Theorem \ref{CA}, we see that this block is zero-sum in $\F_q$ and the assertion follows.
\end{proof}

It is obvious that Theorem \ref{CA} (resp. Theorem \ref{1rA}) may be useful only
if we already know that a cyclic (resp. 1-rotational) $(v,k,1)$-design exists.
In the affirmative case we have to take the prime powers $q\equiv1$ (mod $v$) or (mod $v-1$),
respectively, which are admissible according to Theorem \ref{admissible}. 
Anyway it may happen that no admissible prime power satisfies the required congruence.
In this case we can exclude that the theorem leads to an additive $(v,k,1)$ design.
Here are two examples.
\begin{example}
It has been recently proved in \cite{Zhang} that there
exists a cyclic $(v,4,1)$-design for any admissible $v$ except 25 and 28. Thus, in particular,
there exists a cyclic $(100,4,1)$-design. 
Assume that there exists an $\mathrm{EA}(q)$-additive $(100,4,1)$-design for some prime power $q\equiv 1$ (mod 100). 
In this case, by Theorem \ref{admissible}, $q$ must be a power of a prime divisor of ${100-4\over4-1}=2^5$, hence $q=2^m$
for some $m$. On the other hand, it is obvious that there is no value of $m$ for which the congruence 
$2^m\equiv1$ (mod 100) holds.

We conclude that Theorem \ref{CA} cannot lead to any additive $(100,4,1)$-design. 
\end{example}

\begin{example}
It is known that a cyclic $(105,5,1)$-design exists \cite{AB}.
Assume that there exists an $\mathrm{EA}(q)$-additive $(105,5,1)$-design for some prime power $q\equiv 1$ (mod 105). 
In this case, by Theorem \ref{admissible}, $q$ must be a power of a prime divisor of ${105-5\over5-1}=5^2$, hence $q=5^m$
for some $m$. On the other hand, it is obvious that there is no value of $m$ for which the congruence 
$5^m\equiv1$ (mod 105) holds.

Also here, Theorem \ref{CA} cannot be useful to find an additive $(105,5,1)$-design. 
\end{example}

If, on the contrary, we have an admissible prime power $q=nv+1$ (resp. $q=n(v-1)+1$), 
we should make a computer search to establish whether a cyclic (resp. $1$-rotational) 
$(R_{q,v},k,1)$ (resp. $(R_{q,v-1},k,1)$) difference family exists.
Our experimental results seem to indicate that this search is usually heavy and that it
succeeds very rarely. This is expected especially when $q$ is much greater than $v$. Indeed, the larger is $q$ 
with respect to $v$, the lesser is the number 
of candidates for the base blocks of the required difference family (recall indeed that they have to be zero-sum!). 

Assume for instance that we want to construct an additive $(88,4,1)$-design using Theorem \ref{1rA}.
It makes sense since it is known that there exists a 1-rotational $(v,4,1)$-design for any $v\equiv4$ (mod 12)
except $v=28$ (see \cite{Zhao}).
Here we have ${v-k\over k-1}={88-4\over3}=28$ so that, by Theorem \ref{admissible}, the admissible $q$
must be powers of $2$ or $7$. Hence $q=2^m$ or $q=7^m$ some $m$. Recall, also, that Theorem \ref{1rA}
requires that $q\equiv 1$ (mod 87). The least $m$ for which $2^m\equiv1$ (mod 87) is $28$, whereas the least
$m$ for which $7^m\equiv1$ (mod 87) is 7. So, at this point, we should find a 
1-rotational $(R_{87,q},4,1)$ difference family whose base blocks are all zero-sum in $\F_q$
with either $q=2^{28}$ or $q=7^7$ which are both very large. In both cases, we have checked that $R_{87,q}$ does not even have
one zero-sum subset of size 4.

\section{An $\mathrm{EA}(2^8)$-additive 1-rotational and resolvable $(52,4,1)$-design}

Assume that we want to construct an additive $(52,4,1)$-design.
Given that 1-rotational $(52,4,1)$-designs exist,
it makes sense trying to use Theorem \ref{1rA}.
Here we have $v=52$ and $k=4$ so that ${v-k\over k-1}={48\over3}=16$. Hence the admissible values of $q$
must be powers of 2. The least power of 2 of the form $(v-1)n+1$ is $q=2^8$ which,
fortunately, is not so large. We have to find a 1-rotational $(R_{q,51},4,1)$ difference family
whose base blocks are all zero-sum in $\F_{q}$.

One can check that $p(x)=x^8+x^4+x^3+x^2+1$ is a primitive polynomial over $\Z_2$ so that
we have $\F_{q}=\Z_2[x]/(p(x))$ and $x$ is a primitive element of $\F_q$.
Given that $2^8=51\cdot 5+1$, the group $R_{q,51}$ can be viewed as the group of non-zero 
5th powers of $\F_q$. Hence  $g:=x^5$ is a generator of $R_{q,51}$ and it is easy to check that the desired
difference family is ${\cal F}=\{B_1,B_2,B_3,B_4\}$ with 

$$B_1=\{g, g^{12}, g^{16}, g^{39}\}; \quad B_2=\{g^3, g^4, g^{13}, g^{48}\};$$
$$B_3=\{g^6, g^{8}, g^{26}, g^{45}\};\quad B_4=\{g^7, g^{10}, g^{15}, g^{36}\}.$$

The above 1-rotational difference family $\cal F$ has the very special property to be {\it resolvable}
(see \cite{BZ}). It essentially means that the design generated by $\cal F$ is resolvable.
Its resolution is $\{g^h{\cal P}_0 \ | \ 0\leq h\leq 16\}$ with $${\cal P}_0=\{g^{17i}B_j \ | \ 0\leq i\leq 2; 1\leq j\leq 4 \} \ \cup \ \{B_0\}$$
where $B_0=\{0,1,g^{17},g^{34}\}$.
Thus we can state the following.
\begin{theorem} There exists an additive $(52,4,1)$-design that is both $1$-rotational and resolvable.
\end{theorem}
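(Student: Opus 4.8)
The plan is to read the statement as a direct instance of Theorem~\ref{1rA} applied to the difference family $\cal F$ exhibited just above, supplemented by a verification that the displayed resolution genuinely makes the resulting design resolvable. Since all the objects are given explicitly, there is no conceptual obstacle left: the entire argument is a finite verification. First I would confirm that the hypotheses of Theorem~\ref{1rA} hold. Here $v=52$, $k=4$, and $q=2^8=256=5\cdot 51+1$, so $q$ is a prime power of the required form $n(v-1)+1$ with $n=5$; moreover $q$ is admissible by Theorem~\ref{admissible} because $(v-k)/(k-1)=16=2^4$ forces $q$ to be a power of $2$, and $2^8$ is the least such power congruent to $1$ modulo $51$ (indeed $\mathrm{ord}_{51}(2)=8$). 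Thus Theorem~\ref{1rA} will apply as soon as $\cal F$ is shown to be a $1$-rotational $(R_{q,51},4,1)$ difference family whose base blocks are zero-sum.

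Second I would verify the difference-family property. Writing every element of $R_{q,51}$ as a power of the generator $g=x^5$, each base block $B_j$ becomes a $4$-subset of exponents modulo $51$, and the list of ratios of $\cal F$ becomes the multiset of the $4\cdot 3=12$ exponent differences of each $B_j$. Since $R_{q,3}=\langle g^{17}\rangle$ corresponds to the exponents $\{0,17,34\}$, I must check that the $48$ exponent differences coming from $B_1,\dots,B_4$ cover $\{1,\dots,50\}\setminus\{17,34\}$ each exactly once; the cardinalities already match ($48=51-3$), so this reduces to a bookkeeping check on the four explicit blocks, which I would delegate to a short computation.

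Third I would check that each $B_j$ is zero-sum in $\F_{256}$. Using the identification $\F_{256}=\Z_2[x]/(p(x))$ with $p(x)=x^8+x^4+x^3+x^2+1$, each $g^e=x^{5e}$ reduces to a polynomial of degree $<8$, i.e.\ a vector over $\Z_2$, and because the characteristic is $2$ the block is zero-sum precisely when the four associated vectors sum to $0$. This is the step I expect to be the most error-prone, since it demands honest arithmetic in $\F_{256}$ modulo $p(x)$; it is nevertheless entirely mechanical. (The genuine difficulty lies not in verification but in \emph{finding} a family that is simultaneously zero-sum and resolvable, which is a computer search.) Once both properties are in place, Theorem~\ref{1rA} yields an $\mathrm{EA}(2^8)$-additive $1$-rotational $(52,4,1)$-design, with the extra point $\infty$ taken to be the zero of $\F_{256}$.

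Finally I would certify resolvability directly from the displayed resolution. The point set is $R_{q,51}\cup\{0\}$, the block $B_0=\{0,1,g^{17},g^{34}\}$ is exactly the infinity block $H\cup\{0\}$ for $H=R_{q,3}$, and the $12$ blocks $g^{17i}B_j$ $(0\le i\le 2,\ 1\le j\le 4)$ cover, in exponent terms, the residues $\bigcup_{i,j}(B_j+17i)\pmod{51}$. I would check that this union is precisely $\{0,\dots,50\}\setminus\{0,17,34\}$ with each residue occurring once; together with $B_0$ this shows that $\cal P_0$ is a parallel class, so $\cal F$ is a \emph{resolvable} $1$-rotational difference family in the sense of \cite{BZ}. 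Multiplying by $g^h$ for $0\le h\le 16$ sends $\cal P_0$ to $16$ further classes, and since $17i+h$ runs over all residues modulo $51$ as $(i,h)$ varies, these $17$ classes account for each of the $4\cdot 51=204$ translates of the base blocks and each of the $17$ infinity blocks exactly once, i.e.\ all $221=52\cdot 51/12$ blocks. Hence the design is resolvable, which completes the proof.
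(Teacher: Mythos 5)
Your proposal is correct and follows essentially the same route as the paper: the paper simply exhibits the difference family $\cal F$ and the resolution $\{g^h{\cal P}_0\}$ and appeals to Theorems \ref{admissible} and \ref{1rA} together with the notion of a resolvable $1$-rotational difference family from \cite{BZ}, leaving the finite verifications implicit. You have merely spelled out those verifications (admissibility of $q=2^8$, the difference-family and zero-sum checks, and the counting that certifies ${\cal P}_0$ as a parallel class and the $17$ translates as a full resolution), all of which are accurate.
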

This design, together with the 
additive $(124,4,1)$-design constructed in \cite{BMN} is, up to now, the
only known handy example of a Steiner 2-design whose
parameters are not ``geometric", namely neither of the form $(q^n,q,1)$, nor of the form $({q^n-1\over q-1},{q^k-1\over q-1},1)$.

Yet, the above constructed design has something geometric anyway. 
Indeed we have discovered that it is isomorphic to the 1-rotational resolvable $(52,4,1)$-design in \cite{HTK} which is proved to be embeddable
in the desarguesian projective plane of order 16.

It may be interesting to know that we established their isomorphism
by checking that they both have {\it $2$-rank} 41 whereas the 2-rank of any other 1-rotational resolvable $(52,4,1)$ design is either 51 or 49.
We recall that there are precisely twenty-two 1-rotational resolvable $(52,4,1)$ designs; they have been
explicitly given in \cite{BZ2}.

\section{Three new $\mathrm{EA}(3^5)$-additive $(121,4,1)$-designs}

It is well-known that the point-line incidence structure associated with the projective space $\mathrm{PG}(n-1,q)$
is a $({q^n-1\over q-1},q+1,1)$-design. As recalled in the introduction, it has been proved that this design 
is $\mathrm{EA}(q^n)$-additive. 

On the other hand, it is possible that Theorem \ref{CA} may lead to at least two non-isomorphic
additive designs with parameters $({q^n-1\over q-1},q+1,1)$. We verified this for $q=3$ and $n=5$.

\begin{theorem} There exist at least four pairwise non-isomorphic $\mathrm{EA}(3^5)$-additive $(121,4,1)$-designs.
\end{theorem}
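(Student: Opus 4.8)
The plan is to realize the four designs as cyclic designs coming from Theorem~\ref{CA} with $q=3^5=243$ and $v=121$, and then to separate them by a computable isomorphism invariant. First I would record the admissibility data: since $\frac{v-k}{k-1}=\frac{117}{3}=39=3\cdot 13$, Theorem~\ref{admissible} allows only powers of $3$ or of $13$, and $3^5=2\cdot 121+1$ is indeed of the form $nv+1$ required by Theorem~\ref{CA} (with $n=2$). Hence $R_{243,121}$ is the index-$2$ subgroup of $\F_{243}^{*}$, i.e.\ the set of nonzero squares. Because $121\equiv 1\pmod{12}$ we are in the first case of Theorem~\ref{CA}: there is no subgroup block, the replication number is $r=40$, there are $b=1210$ blocks in all, and a cyclic $(R_{243,121},4,1)$ difference family therefore consists of $b/v=10$ base blocks, each a zero-sum $4$-subset of $R_{243,121}$.

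Next I would fix a model $\F_{243}=\Z_3[x]/(p(x))$ for a primitive polynomial $p$, take a generator $g$ of $R_{243,121}$ (for example $g=x^{2}$), and run a computer search --- as in the $(52,4,1)$ construction --- over the zero-sum $4$-subsets of $R_{243,121}$ in order to exhibit explicit cyclic $(R_{243,121},4,1)$ difference families all of whose base blocks are zero-sum. Displaying three such families $\mathcal{F}_1,\mathcal{F}_2,\mathcal{F}_3$ (each as its ten base blocks, written as sets of powers of $g$) reduces the verification to a finite check of the difference-family and zero-sum conditions, after which Theorem~\ref{CA} immediately yields three $\mathrm{EA}(3^5)$-additive $(121,4,1)$ designs $\mathcal{D}_1,\mathcal{D}_2,\mathcal{D}_3$. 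As a fourth design I take the point-line design $\mathcal{D}_0$ of $\mathrm{PG}(4,3)$, whose parameters are $\bigl(\frac{3^5-1}{2},3+1,1\bigr)=(121,4,1)$ and which is $\mathrm{EA}(3^5)$-additive by Theorem~\ref{summary}(ii) (it is moreover itself cyclic, via a Singer cycle, so it fits the same framework).

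The decisive step is to show that $\mathcal{D}_0,\mathcal{D}_1,\mathcal{D}_2,\mathcal{D}_3$ are pairwise non-isomorphic. For this I would compute, for each of the four designs, an invariant preserved by design isomorphisms; the natural candidates are the $3$-rank of the incidence matrix (an invariant of the same kind as the $2$-rank used earlier in the $(52,4,1)$ case) and the order of the full automorphism group. It suffices to check that one of these invariants takes four distinct values across $\mathcal{D}_0,\dots,\mathcal{D}_3$. In particular, the point-line design of $\mathrm{PG}(4,3)$ has a $3$-rank prescribed by Hamada's formula, which I would confirm differs from the $3$-ranks of the three new designs, thereby also establishing that none of $\mathcal{D}_1,\mathcal{D}_2,\mathcal{D}_3$ is the classical design.

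The main obstacle I anticipate is not the construction but this non-isomorphism argument, together with the organization of the search. The search over zero-sum $4$-subsets of $R_{243,121}$ should be pruned using the action of $R_{243,121}$ and of the Frobenius/multiplier group of $\F_{243}$ so that the backtracking remains feasible. More delicately, the whole separation rests on an invariant that genuinely distinguishes every pair: if the $3$-rank collapses two of the designs, I would fall back on automorphism-group orders or on finer combinatorial statistics such as the distribution of Pasch configurations (quadrilaterals) among the blocks. Pinning down these invariant values rigorously --- rather than merely producing the four designs --- is where the real work lies.
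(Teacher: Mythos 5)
Your proposal follows essentially the same route as the paper: apply Theorem~\ref{CA} with $q=3^5$, exhibit explicit zero-sum cyclic $(R_{q,121},4,1)$ difference families found by computer (the paper gives four, one of which generates the point-line design of $\mathrm{PG}(4,3)$), and verify pairwise non-isomorphism computationally. The only cosmetic difference is that the paper settles non-isomorphism by a direct GAP isomorphism test rather than by comparing invariants such as the $3$-rank.
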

\begin{proof}
Given that $(121,4,1)$ are the parameters of the point-line design associated with $\mathrm{PG}(4,3)$,
we already know that there is at least one additive design with these parameters.
We now investigate whether other additive $(121,4,1)$-designs can be obtained via Theorem \ref{CA}.
Here we have $v=121$ and $k=4$ so that ${v-k\over k-1}={117\over3}=39$. Hence, the admissible values of $q$
are the powers of 3 or 13. The least power of 3 of the form $vn+1$ is $q=3^5$.

For applying Theorem \ref{CA} we have to find a cyclic $(R_{q,121},4,1)$ difference family
whose base blocks are all zero-sum in $\F_{q}$.
We are going to describe four of these difference families.

One can check that  $p(x)=x^5+2x+1$ is a primitive
polynomial over $\Z_3$ so that we can write $\F_q=\Z_3[x]/(p(x))$ and $x$ is a primitive element of $\F_q$.
Given that  $3^5=121\cdot 2+1$, the group $R_{q,121}$ can be viewed as the group of non-zero squares
of $\F_{121}$. Hence $g:=x^2$ is a generator of $R_{q,121}$.

Consider the 4-subsets of $R_{q,121}$
$$A_1=\{1,g,g^{5},g^{69}\},\quad A_2=\{1,g,g^{21},g^{55}\},$$
$$A_3=\{1,g,g^{52},g^{93}\},\quad A_4=\{1,g,g^{65},g^{78}\},$$
$$B_1=\{1,g^2,g^{46},g^{74}\},\quad B_2=\{1,g^4,g^{79},g^{95}\},$$
$$B_3=\{1,g^4,g^{15},g^{78}\},\quad B_4=\{1,g^2,g^{25},g^{116}\},$$

and set, for $1\leq i\leq 4$, 
$${\cal F}_i=\{g^{3^j}A_i, g^{3^j}B_i \ | \ 0\leq j\leq 4\}.$$

One can check that each ${\cal F}_i$ is a cyclic $(R_{q,121},4,1)$ difference family
whose base blocks are zero-sum in $\F_q$.
Using GAP \cite{Gap} we also checked that the designs generated by them are
pairwise non-isomorphic. In particular, the design generated by ${\cal F}_1$ is
the point-line design associated with $\mathrm{PG}(4,3)$.
\end{proof}

\section{On the possible existence of an additive $(511,7,1)$-design}

As a special case of Theorem \ref{CA} we have the following.
\begin{corollary}\label{2analogalike}
If there exists a cyclic $(\F_{2^v}^*,k,1)$ difference family whose base blocks are all zero-sum
in $\F_{2^v}$, then there exists an $\mathrm{EA}(2^v)$-additive $(2^v-1,k,1)$-design.
\end{corollary}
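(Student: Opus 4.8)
The plan is to derive this as a direct specialization of Theorem \ref{CA} by setting $q = 2^v$ and choosing the parameter $n$ appropriately. The key observation is that $\F_{2^v}^*$ is precisely the full multiplicative group of $\F_{2^v}$, which has order $2^v - 1$. So the design we seek to construct will have point-set of size $2^v - 1$, matching the claimed parameters $(2^v - 1, k, 1)$.

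To connect this with Theorem \ref{CA}, I would first identify the design parameter. In Theorem \ref{CA} we write $q = nv_0 + 1$ where $v_0$ denotes the number of points of the target design (I use $v_0$ here to avoid clashing with the exponent $v$ in the corollary statement). Here $q = 2^v$ and the point count is $2^v - 1$, so I set $v_0 = 2^v - 1$ and take $n = 1$, giving $q = 1 \cdot (2^v - 1) + 1 = 2^v$ as required. Under this identification, $R_{q, v_0} = R_{2^v,\, 2^v - 1}$ is the subgroup of $\F_{2^v}^*$ of order $2^v - 1$, which is all of $\F_{2^v}^*$. Thus a cyclic $(\F_{2^v}^*, k, 1)$ difference family is exactly a cyclic $(R_{q, v_0}, k, 1)$ difference family in the language of Theorem \ref{CA}.

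With this translation in hand, the corollary is immediate: a cyclic $(\F_{2^v}^*, k, 1)$ difference family whose base blocks are zero-sum in $\F_{2^v}$ satisfies precisely the hypotheses of Theorem \ref{CA} with the parameters above. Applying Theorem \ref{CA} then yields an $\mathrm{EA}(q)$-additive $(v_0, k, 1)$ design, that is, an $\mathrm{EA}(2^v)$-additive $(2^v - 1, k, 1)$ design, which is the conclusion.

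\emph{The only real point that needs care} is the bookkeeping with the two different uses of the letter $v$ (the exponent in the corollary versus the point count in Theorem \ref{CA}), together with verifying that $R_{2^v,\, 2^v-1} = \F_{2^v}^*$ rather than a proper subgroup. This holds because $\F_{2^v}^*$ is cyclic of order exactly $2^v - 1$, so its unique subgroup of order $2^v - 1$ is the whole group. There is no substantive obstacle beyond this identification; the corollary is genuinely just the $n = 1$ instance of the preceding theorem, and no new combinatorial or field-theoretic input is required.
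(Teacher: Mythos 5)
Your proposal is correct and matches the paper exactly: the paper states this corollary as "a special case of Theorem \ref{CA}" with no further proof, and the intended specialization is precisely the one you spell out, namely $q=2^v$, $n=1$, point count $2^v-1$, so that $R_{2^v,\,2^v-1}=\F_{2^v}^*$. Your careful bookkeeping of the two roles of the letter $v$ is a welcome clarification but introduces no new content.
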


By interpreting $\F_{2^v}$ as the $v$-dimensional vector space over $\Z_2$, 
 the set of points of $\mathrm{PG}(v-1,2)$ is $\F_{2^v}^*$, whereas the $(k-1)$-dimensional
subspaces of $\mathrm{PG}(v-1,2)$ are the $(2^k-1)$-subsets of $\F_{2^v}^*$ 
which, together with the zero vector, form a $k$-dimensional subspace of $\F_{2^v}$.
We have the following remarkable variant of Corollary \ref{2analogalike}, which can be deduced
from Theorem 6.4 in \cite{BNW}. We prove it anyway along the lines of Theorem \ref{CA}
for the convenience of the reader.

\begin{theorem}\label{2analog}
If there exists a cyclic $(\F_{2^v}^*,2^k-1,1)$ difference family whose base blocks are all 
$(k-1)$-dimensional subspaces of $\mathrm{PG}(v-1,2)$, then there exists the $2$-analog of a $(v,k,1)$-design.
\end{theorem}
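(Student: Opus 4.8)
The plan is to mirror the structure of the proof of Theorem~\ref{CA}, but carefully track the geometric condition that the base blocks are $(k-1)$-dimensional projective subspaces rather than merely zero-sum $(2^k-1)$-subsets. Let $\cal F$ be a cyclic $(\F_{2^v}^*,2^k-1,1)$ difference family whose base blocks are all $(k-1)$-dimensional subspaces of $\mathrm{PG}(v-1,2)$. Writing $q=2^v$ and recalling that $\F_q^*=R_{q,q-1}$ is cyclic of order $2^v-1$, I would first invoke Theorem~\ref{designs via DFs} to describe the blocks of the resulting cyclic $(2^v-1,2^k-1,1)$-design as all translates $Bg$ with $B\in{\cal F}$ and $g\in\F_q^*$, together with (in the appropriate congruence case) the cosets of the subgroup of order $2^k-1$.

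The key point, distinct from Theorem~\ref{CA}, is to verify that each such block is again a $(k-1)$-dimensional projective subspace, not just a zero-sum set. First I would note that a $(2^k-1)$-subset $S$ of $\F_q^*$ is a $(k-1)$-dimensional subspace of $\mathrm{PG}(v-1,2)$ precisely when $S\cup\{0\}$ is a $k$-dimensional $\Z_2$-subspace of $\F_{2^v}$, equivalently when $S\cup\{0\}$ is closed under addition. Then I would observe that multiplication by any fixed $g\in\F_q^*$ is an $\F_2$-linear bijection of $\F_{2^v}$: indeed $g(a+b)=ga+gb$, so scaling a subspace produces a subspace of the same dimension. Hence each translate $Bg$ of a base block $B$ is again a $(k-1)$-dimensional subspace, and $B$ being a subspace automatically makes it zero-sum (any nontrivial $\Z_2$-subspace sums to $0$, since its nonzero elements pair up or, more directly, the sum over a subgroup is zero). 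For the case $v\equiv 2^k-1$ modulo the relevant modulus, the extra blocks are the cosets $Hg$ of the subgroup $H$ of order $2^k-1$ of $\F_q^*$; here I would remark that this $H$ together with $0$ is exactly the subfield $\F_{2^k}$ of $\F_{2^v}$ (which requires $k\mid v$), so $Hg$ is a coset of an additive subgroup and is therefore likewise a $(k-1)$-dimensional subspace.

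Finally I would assemble these observations: every block of the generated design is a $(k-1)$-dimensional subspace of $\mathrm{PG}(v-1,2)$, the point-set is $\F_{2^v}^*=\mathrm{PG}(v-1,2)$, every pair of points lies in exactly one block by the difference-family property, so the design is precisely the $2$-analog of a $(v,k,1)$-design in the sense defined in the introduction. The assertion then follows.

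The step I expect to require the most care is the treatment of the cosets $Hg$ in the $v\equiv 2^k-1$ case: one must check that the subgroup of $\F_q^*$ of order $2^k-1$ really is the multiplicative group of a subfield $\F_{2^k}$ so that the corresponding cosets are additive translates of a subspace, which tacitly uses $k\mid v$ (guaranteed since $2^k-1\mid 2^v-1$). If instead $v\equiv 1$ modulo the modulus, only the translate blocks $Bg$ occur and this subtlety disappears, so I would handle the two congruence cases separately as in Theorem~\ref{CA}. The scaling argument for the translates is routine linear algebra, so the genuine content is simply recognizing that multiplicative translation preserves $\F_2$-subspace structure.
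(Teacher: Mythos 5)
Your proposal is correct and follows essentially the same route as the paper's proof: apply Theorem~\ref{designs via DFs}, observe that multiplication by $g\in\F_{2^v}^*$ is an $\F_2$-linear bijection so that each translate $Bg$ remains a $(k-1)$-dimensional subspace of $\mathrm{PG}(v-1,2)$, and note that in the case $k\mid v$ the cosets of the subgroup of order $2^k-1$ are scalings of the subfield $\F_{2^k}$ and hence also such subspaces; you merely spell out details the paper leaves as ``easily implies.'' (Only a cosmetic slip: the relevant congruence is on $2^v-1$, not on $v$, but this does not affect the argument.)
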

\begin{proof}
Let $\cal F$ be a difference family as in the assumption. 
By Theorem \ref{designs via DFs}, any block of the cyclic $(2^v-1,2^k-1,1)$-design generated by $\cal F$ is of the form
$Bg$ with $B\in{\cal F}$ and $g\in \F_{2^v}^*$ or, possibly, a coset of the subgroup of $\F_{2^v}^*$ of order 
$2^k-1$ in the case that $k$ is a divisor of $v$. 
The fact that $B$ is a $(k-1)$-dimensional subspace of $\mathrm{PG}(v-1,2)$
easily implies that the block $Bg$ has the same property. Also note that when $k$ divides $v$
every coset of the subgroup of $\F_{2^v}^*$ of order $2^k-1$ is also a 
$(k-1)$-dimensional subspace of $\mathrm{PG}(v-1,2)$.

Thus, all the blocks of the design generated by $\cal F$ are $(k-1)$-dimensional subspaces of $\mathrm{PG}(v-1,2)$ and the assertion follows.
\end{proof}

The parameters of the additive designs with less than 100,000 points which are potentially obtainable 
from Corollary \ref{2analogalike} are only the following:
$$(127,7,1), \ (511,6,1), \ (511,7,1), \ (8191,6,1), \ (8191,7,1), \ (8191,10,1).$$
Among them, the 2-analogs which are potentially obtainable 
from Theorem \ref{2analog} have parameters
$$(127,7,1), \quad (511,7,1), \quad (8191,7,1).$$
They would be 2-analogs of a $(v,3,1)$ design with $v=7, 9$ and $13$, respectively.

As a matter of fact the existence of a $(127,7,1)$-design is not known even without any special requisite. 
Also, it has been proved that the automorphism group of a putative 2-analog of a $(7,3,1)$-design has 
order at most 2 (see ~\cite{BKN15, KKW16}) so that Theorem \ref{2analog} cannot lead to such a 2-analog.
The 2-analog of a $(13,3,1)$-design has been constructed in \cite{BEOVW}
with the implicit use of Theorem \ref{2analog}.

Given that cyclic $(511,7,1)$-designs exist (see Theorem 3.1 in \cite{B2}), it makes sense 
to try using Theorem \ref{CA}. For now, however, an exhaustive computer search with the
only use of this theorem seems to be unfeasible.

Unfortunately, Theorem \ref{2analog} does not lead to 
any 2-analog of a $(9,3,1)$-design. Hence, we obtain the following result.
\begin{theorem}\label{(9,3,1)_2}
A putative $2$-analog of a $(9,3,1)$-design cannot be cyclic.
\end{theorem}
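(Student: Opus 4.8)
The plan is to translate cyclicity into a difference-family statement, exploit the unique nontrivial subfield $\F_8\subset\F_{2^9}$, and reduce the problem to a rigid configuration in $\mathrm{PG}(2,8)$ that I expect to be the crux. First I would record that the putative object has parameters $(511,7,1)$ with $511=2^9-1=7\cdot 73$. By Theorem~\ref{2analog} together with Theorem~\ref{designs via DFs}, a cyclic such design is equivalent to a cyclic $(\F_{2^9}^*,7,1)$ difference family whose base blocks are planes of $\mathrm{PG}(8,2)$, i.e. the nonzero parts of $3$-dimensional $\F_2$-subspaces of $\F_{2^9}$. Since $511\equiv 7\pmod{42}$, Theorem~\ref{designs via DFs}(ii) forces the short orbit to consist of all cosets of the subgroup $H$ of order $7$. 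As $3\mid 9$, this subgroup is $H=\F_8^*$, and the cosets $g\F_8$ are exactly the $73$ one-dimensional $\F_8$-subspaces, i.e. the points of $\mathrm{PG}(2,8)$ under the field-reduction projection $\pi:\mathrm{PG}(8,2)\to\mathrm{PG}(2,8)$, $x\mapsto\langle x\rangle_{\F_8}$. In particular these $\F_8$-subspaces are themselves blocks.

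The key lemma I would isolate is that every full-orbit (Fano-plane) block is \emph{scattered} with respect to $\pi$: it meets each $\F_8$-subspace in at most one point. Indeed, if $x,y$ lay in a common full-orbit block with $\pi(x)=\pi(y)$, then $x/y\in\F_8^*=H$, so $x$ and $y$ already lie together in the $\F_8$-subspace block $\langle x\rangle_{\F_8}$; by the Steiner property they can lie in no further block, a contradiction. Hence $\pi$ is injective on each base block, whose image is a $7$-point set in $\mathrm{PG}(2,8)$, namely a scattered $\F_2$-linear set of rank $3$. Projecting the whole family along $\pi$ and identifying $\mathrm{PG}(2,8)$ with $\Z_{73}=\F_{2^9}^*/\F_8^*$ then yields a cyclic $(\Z_{73},7,7)$ difference family of $12$ base blocks, each an image set as above, subject to the additional \emph{fine} constraint that over every nonzero coset of $\F_8^*$ the seven contributing ratios exhaust that coset.

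Next I would classify the possible image sets. A scattered $\F_2$-linear set of rank $3$ in $\mathrm{PG}(2,8)$ has exactly $2^3-1=7$ points and is either (A) collinear, lying on a line of $\mathrm{PG}(2,8)$, or (B) spanning, in which case $W=\langle v_1,v_2,v_3\rangle_{\F_2}$ for an $\F_8$-basis $v_1,v_2,v_3$ and its image is a subfield Fano subplane $\mathrm{PG}(2,2)\subset\mathrm{PG}(2,8)$. Using that a line of $\mathrm{PG}(2,8)$ meets a subfield subplane in $0$, $1$, or $3$ points, one obtains, for each line $m$, incidence equations relating the number of type-(A) blocks supported on $m$ to the number of type-(B) subplanes having $m$ as an extended subplane-line; these must be reconciled with the reduced replication data.

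The main obstacle is the final assembly step: showing that no distribution of type-(A) and type-(B) base blocks can simultaneously form the reduced $(\Z_{73},7,7)$ difference family and satisfy the fine $\F_8^*$-coset covering. I would attack this geometrically, confronting the rigid internal difference structure of these linear sets with the planar difference set of $\mathrm{PG}(2,8)$ (whose line-differences cover each nonzero element of $\Z_{73}$ exactly once) and with the Segre-variety structure governing how $\F_2$-rational configurations embed in $\mathrm{PG}(2,8)$; I expect the fine lifting condition to be what makes the two types incompatible. Failing a clean geometric contradiction, the same reduction drastically shrinks the admissible base blocks — only scattered rank-$3$ linear sets survive — so the assembly can instead be excluded by an exhaustive Kramer--Mesner search.
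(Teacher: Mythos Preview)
Your plan mirrors the paper's two-pronged strategy almost exactly: a geometric reduction through the $\F_8$-spread to $\mathrm{PG}(2,8)$, yielding a $2$-$(73,7,7)$ design (your $(\Z_{73},7,7)$ difference family) on $12$ $W$-orbits, with a Kramer--Mesner search as fallback. In both proofs the final contradiction is computational, not purely geometric.

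Two differences are worth noting. First, your ``fine $\F_8^*$-coset covering'' condition is the right instinct, but the paper sharpens it into a much cleaner criterion: any two projected blocks meeting a fixed line $\ell$ of $\mathrm{PG}(2,8)$ in the same $3$-point subline $\mathrm{PG}(1,2)$ must coincide (the paper's Lemma~\ref{digest}), whence the $12$ orbits' $3$-secant traces on $\ell$ form a \emph{perfect cover} of all $84$ sublines $\mathrm{PG}(1,2)\subset\ell$ (Theorem~\ref{TeoJedan}). This subline-partition viewpoint is what makes the geometric search tractable; your coset-lifting formulation is equivalent in content but less directly implementable. Second, you carefully split the projected blocks into type~(A) (collinear scattered linear sets) and type~(B) (subfield Fano subplanes), whereas the paper asserts without further comment that every $\pi_B$ is a Fano subplane and builds its imprint machinery only for that case; your extra care here is warranted, though the independent Kramer--Mesner computation in Section~\ref{KM} is unaffected either way.
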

This will be proved by means of two different approaches in the next subsections. 

\subsection{Proof of Theorem \ref{(9,3,1)_2} via Kramer-Mesner method}\label{KM}

A straightforward approach to the construction of a $(v,k,1)$-design $(V,\mathcal{B})$
is to consider the matrix
\[
M_{k}^v := (m_{i,j}),\quad i=1,\ldots,{v\choose 2},\;
j=1,\ldots,{v\choose k},
\]
where the rows of $M_{k}^v$ are indexed by the set $V\choose2$ of all $2$-subsets of $V$ and the columns
by the set $V\choose k$ of all $k$-subsets of $V$. If the $i$-th $2$-subset is
contained in the $j$-th $k$-subset, $m_{i,j}=1$, otherwise $m_{i,j}=0$.
Then, a $(v,k,1)$-design corresponds to $\{0,1\}$-solutions $x$
of the system of linear equations
\[
M_{k}^v \cdot x = \begin{pmatrix}1\\ \vdots \\1\end{pmatrix}
\]
over the rationals.
In computer science, the above linear system 
is known as \emph{exact cover problem}, 
see \cite{knuth:fasc5}, and is well-known to be NP-hard.

For the most interesting parameters
$v,k$, the size of the matrix $M_{k}^v$ will be prohibitively large and 
even the fastest computers available are not able to solve 
these systems of ${v\choose 2}$ linear equations.

However, by assuming a group action on $V$, sometimes the size of $M_{k}^v$ can be
dramatically reduced. 
A group $G$ acting on $V$ induces also an action on $V\choose2$ and $V\choose k$.
Now, $M^G_{k}=(m_{i,j})$ denotes the matrix where $m_{ij}$ counts 
how often a representative of the $i$-th orbit on $V\choose2$
is contained in the $j$-th orbit on $V\choose k$.
Kramer and Mesner observed the following.
\begin{theorem}\label{KM76}(\cite{KramerMesner:76})
	A $(v,k,1)$-design with
	an automorphism group $G$ exists if and only if there is a $\{0,1\}$-solution $x$ to the
	linear system of equations
	\[
	M^G_{k}\cdot x = \begin{pmatrix}1\\ \vdots \\1\end{pmatrix}
	\]
	over the rationals.
\end{theorem}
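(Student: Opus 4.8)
The plan is to exploit the elementary but crucial fact that a collection $\mathcal{B}$ of $k$-subsets of $V$ is invariant under $G$ if and only if it is a union of $G$-orbits on $\binom{V}{k}$. Writing $\mathcal{O}_1,\dots,\mathcal{O}_n$ for the orbits of $G$ on $\binom{V}{k}$ (so that $j=1,\dots,n$ indexes the columns of $M^G_k$) and $\mathcal{P}_1,\dots,\mathcal{P}_m$ for the orbits of $G$ on $\binom{V}{2}$ (indexing the rows), this yields a bijection $x\mapsto\mathcal{B}_x:=\bigcup_{j:\,x_j=1}\mathcal{O}_j$ between $\{0,1\}$-vectors $x$ of length $n$ and $G$-invariant collections of $k$-subsets. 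Since a $(v,k,1)$-design has distinct blocks (two identical $k$-subsets would cover an inner pair twice, violating $\lambda=1$), the block set is a genuine subset of $\binom{V}{k}$, and requiring $G$ to be an automorphism group means precisely selecting some such $x$ for which $(V,\mathcal{B}_x)$ is a Steiner 2-design.

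First I would record the \emph{orbit-invariance lemma}: for a fixed orbit $\mathcal{O}_j$ and a $2$-subset $P$, the integer $|\{B\in\mathcal{O}_j : P\subset B\}|$ depends only on the orbit $\mathcal{P}_i$ containing $P$, and not on the chosen representative. This follows by a transport argument: if $P'=gP$ with $g\in G$, then $B\mapsto g^{-1}B$ is a bijection of $\mathcal{O}_j$ onto itself (as $\mathcal{O}_j$ is $G$-invariant) carrying the blocks of $\mathcal{O}_j$ through $P'$ onto those through $P$. Hence the entry $m_{i,j}$ is well defined as this common value, computed on any representative $P_i\in\mathcal{P}_i$.

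The heart of the equivalence is then the identity: for any $G$-invariant block set $\mathcal{B}_x$ and any $P\in\mathcal{P}_i$, the number of blocks of $\mathcal{B}_x$ containing $P$ equals $\sum_{j}m_{i,j}x_j$, i.e.\ the $i$-th entry of $M^G_k\, x$. Indeed $\mathcal{B}_x$ is the disjoint union of the orbits $\mathcal{O}_j$ with $x_j=1$, so the count splits as a sum over those orbits, each summand being $m_{i,j}$ by the lemma; in particular this covering number is constant along each $2$-orbit. Both directions now follow at once. If $x$ is a $\{0,1\}$-solution of $M^G_k\, x=\mathbf{1}$, then every $2$-subset lies in exactly one block of $\mathcal{B}_x$, so $(V,\mathcal{B}_x)$ is a $(v,k,1)$-design admitting $G$ as a group of automorphisms. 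Conversely, given such a design $(V,\mathcal{B})$, the $G$-invariance of $\mathcal{B}$ makes it a union of orbits, so its indicator vector $x$ (with $x_j=1$ iff $\mathcal{O}_j\subseteq\mathcal{B}$) is a genuine $\{0,1\}$-vector, and the Steiner property forces each component of $M^G_k\, x$ to equal $1$.

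The argument is essentially bookkeeping and there is no single hard step; the only point demanding care is the orbit-invariance lemma, since the entire reduction---from the $\binom{v}{2}$ covering conditions of the ungrouped system down to the $m$ conditions $M^G_k\, x=\mathbf{1}$---rests on the fact that the covering number of a $2$-subset is a $G$-orbit invariant. I would also emphasize that the restriction to $\{0,1\}$-solutions is not a mere convenience but is dictated by the correspondence: a fractional value of $x_j$ would be meaningless, as one either admits the entire orbit $\mathcal{O}_j$ into $\mathcal{B}$ or none of it.
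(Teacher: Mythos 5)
Your proof is correct and is the standard argument for the Kramer--Mesner theorem: the transport lemma ($B\mapsto g^{-1}B$) showing that the covering number of a $2$-subset is constant along its $G$-orbit (hence $m_{i,j}$ is well defined), the bijection between $\{0,1\}$-vectors and $G$-invariant block sets as unions of orbits on $k$-subsets, and the counting identity equating the entries of $M^G_k\,x$ with pair-covering numbers, with the observation that $\lambda=1$ forces distinct blocks correctly securing the correspondence. Note that the paper offers no proof of this statement at all --- it is quoted with a citation to \cite{KramerMesner:76} --- so your write-up supplies exactly the classical argument the paper delegates to the literature, and there is no divergence of approach to report.
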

The matrix $M^G_{k}$ is now called Kramer-Mesner matrix, since 
it was first introduced formally by Kramer and Mesner in \cite{KramerMesner:76}, 
albeit this approach had been used implicitly by others before.

The same approach can be applied to search for $q$-analogs of
$(v,k,1)$-designs. 
That is, we search for a $q$-analog of a $(v,k,1)$-design having a 
given group $G\leq \mbox{PGL}(v, q)$ as a group of automorphisms.
Instead of orbits on $2$-subsets and $k$-subsets, 
$G$-orbits on $2$-dimensional subspaces
and $k$-dimensional subspaces have to be considered, see \cite{qdesignscomputer2017,qdesigns2017}
for the full details and results.

Analog to classic designs, 
the set of blocks of a $q$-analog of a $(v,k,1)$-design with an automorphism group $G\leq \mbox{PGL}(v, q)$,
would be the union of orbits of $k$-dimensional subspaces 
under the action of $G$.
Now, the Kramer-Mesner matrix $M_{k}^G$ carries the information, how many elements of the orbit $K^G$ 
for a given $k$-dimensional  subspace $K\leq \F_{q}^v$ contain a given $2$-dimensional  subspace $T\leq \F_{q}^v$.

The $q$-analog version of Theorem~\ref{KM76} by Kramer and Mesner is that a 
$q$-analog of a $(v,k,1)$-design with an automorphism group $G$ exists if and only if
there is a $\{0,1\}$-solution $x$ of the system of linear equations
\[
M_{k}^G \cdot x = \begin{pmatrix}1\\ \vdots\\ 1\end{pmatrix}
\]
over the rationals.

In the special case of a \emph{cyclic} $2$-analog of a $(9,3,1)$-design, the set of blocks 
would be the union of orbits of $3$-dimensional subspaces of $\F_2^9$
under the action of the cyclic group $G = \F_{2^9}^*$ (a Singer cycle) of order $511$.

The action of $G$ partitions the $43\,435$ $2$-dimensional subspaces into $85$ orbits, all of size $511$.
Similarly, the $788\,035$ $3$-dimensional subspaces are partitioned by $G$ into $1543$ orbits,
where one orbit is of size $73$ and all the others are of size $511$.

Among the $1543$ orbits on $3$-dimensional subspaces, $1459$ might be used for the construction of a putative 
$2$-analog of a $(9,3,1)$ design. The other orbits are incompatible and can be ignored
because at least one of the $2$-dimensional subspaces is already contained in more than one of their orbit members, 
i.e., the corresponding column of the Kramer-Mesner matrix would have an entry larger than one.

In a straightforward Python implementation of the last author, 
the construction of the Kramer-Mesner matrix $M_{3}^G$ for a putative $2$-analog of a $(9,3,1)$-design 
takes 33 seconds (using \texttt{pypy3}).
The exhaustive enumeration of all solutions of the resulting linear system needs approximately
20 seconds using either one of the
exact cover algorithms by Knuth \cite{knuth:fasc5,knuth:fasc7}, implemented by him in C. 
The computation shows that a putative $2$-analog of a $(9,3,1)$-design cannot be cyclic.

The computations were performed on an Intel(R) Xeon(R) E-2288G CPU (3.70GHz) running Linux, Debian 13.

We do not claim that we are the first to show with the Kramer-Mesner approach that a 
$2$-analog of a $(9,3,1)$-design cannot be cyclic. The same group and design parameters 
had already been tested by the authors of \cite{BEOVW} after their construction of a 
$2$-analog of a $(13,3,1)$-design.
However, apparently this negative result was never published.

\subsection{Proof of Theorem \ref{(9,3,1)_2}  via some geometric arguments}\label{AM}

Let $\mathcal{D}=(V,\mathcal{B})$ be the $2$-analog of the point-line design of $\mathrm{AG}(2,3)$. Then $V$ coincides with the point-set of $\mathrm{PG}(8,2)$ and $\mathcal{B}$ consists of some suitable copies of $\mathrm{PG}(2,2)$. Here, we assume that $%
\mathcal{D}$ admits $G\cong \mathbb{Z}_{511}$ as a point-regular automorphism
group. Therefore, $G$ is a Singer subgroup of $\mathrm{PGL}(9,2)$, and hence $G$
preserves a unique Desarguesian $2$-spread $\Sigma $ of $\mathrm{PG}(8,2)$, namely
a set of $73$ pairwise disjoint planes of $\mathrm{PG}(8,2)$, by \cite[Lemma 2.1]%
{Dru}. Thus, $G$ preserves a copy of the plane $\mathrm{PG}(2,8)$, where the
points are the elements of $\Sigma $ and the lines are the $5$-dimensional subspaces of $\mathrm{PG}(8,2)$ generated by any two elements of $\Sigma$. In particular, $G$
acts on $\mathrm{PG}(2,8)$ in a natural way inducing $W\cong \mathbb{Z}_{73}$ on it.

Let $B$ be any block of $\mathcal{D}$ such that $B\notin \Sigma $, then $%
\left\vert B\cap X\right\vert \leq 1$ for any $X\in \Sigma $ since $\lambda =1$ and $X$ is also a
block of $\mathcal{D}$. Therefore, the incidence
structure $\pi _{B}$, whose point-set consists of the $7$ elements of 
$\Sigma$ intersecting $B$ in a point and whose block-set consists of the
intersections of $B$ with any $5$-dimensional subspace of $\mathrm{PG}(8,2)$
generated by two elements of $\Sigma$, is an isomorphic copy of $\mathrm{PG}(2,2)$ naturally embedded in $\mathrm{PG}(2,8)$.

\begin{lemma}
	\label{LemJedan}Let $\mathcal{D}^{\prime }=(V^{\prime },\mathcal{B}^{\prime })$, where $V^{\prime }$
	is the point-set of $\mathrm{PG}(2,8)$ and $\mathcal{B}^{\prime }=\left\{ \pi
	_{B} \mid B\in \mathcal{B}\setminus \Sigma \right\} $. Then the following hold:
	
	\begin{enumerate}
		\item $\mathcal{D}^{\prime }=(V^{\prime },\mathcal{B}^{\prime })$
		is a $2$-$(73,7,7)$ design admitting $W$ as a point-regular automorphism
		group;
		
		\item $\mathcal{B}^{\prime }$ consists of $12$ orbits under $W$.
	\end{enumerate}
\end{lemma}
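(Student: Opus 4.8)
The plan is to exhibit $\mathcal{D}'$ as the quotient of $\mathcal{D}$ by the subgroup $R:=R_{2^9,7}$ of $G$, i.e. the multiplicative group $\F_8^*$ of the subfield $\F_8\le\F_{2^9}$. Viewing $\F_{2^9}$ as the $3$-dimensional $\F_8$-space $\F_8^3$, the $R$-orbits on the points of $\mathrm{PG}(8,2)$ are exactly the $\F_8$-lines, and these constitute the Desarguesian spread $\Sigma$; in particular each element of $\F_8^*$ fixes every spread element setwise while permuting the seven points inside it. Since $R\le G$ preserves $\mathcal{B}$ and fixes $\Sigma$, every $B\in\mathcal{B}\setminus\Sigma$ is sent to a block $\mu B\in\mathcal{B}\setminus\Sigma$ meeting the same seven spread elements, so $\pi_{\mu B}=\pi_B$; and the action of $R$ on $\mathcal{B}\setminus\Sigma$ is free, because $\mu B=B$ would force $\mu$ to fix the point $B\cap L$ on each spread element $L$ met by $B$, whence $\mu=1$. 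Thus $B\mapsto\pi_B$ is constant on the $R$-orbits, each of length $7$, and $v'=|\Sigma|=73$.

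Before counting I would note that the seven points of $B$ lie in seven \emph{distinct} spread elements (two of them in one $X\in\Sigma$ would push the joining line into $B\cap X$, against $|B\cap X|\le 1$), so every $\pi_B$ has exactly $7$ points and $k'=7$. For distinct $X,Y\in\Sigma$, a member $\pi_B$ of $\mathcal{B}'$ contains both iff $B$ meets $X$ at some $x$ and $Y$ at some $y$, in which case the line $xy$ lies in $B$; conversely each of the $7\cdot 7=49$ lines joining a point of $X$ to a point of $Y$ lies in a unique block of $\mathcal{D}$ (as $\lambda=1$), necessarily outside $\Sigma$. Using $\lambda=1$ once more to rule out two such lines sharing a block, this is a bijection, so there are exactly $49$ blocks $B\in\mathcal{B}\setminus\Sigma$ meeting both $X$ and $Y$; since $R$ fixes $X$ and $Y$ it permutes these $49$ blocks in orbits of length $7$.

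The crux is to upgrade ``constant on orbits'' to ``exactly $7$-to-$1$'', i.e. to show the fibre of $B\mapsto\pi_B$ over each $\pi$ is a single $R$-orbit. Granting this, the $49$ blocks meeting $\{X,Y\}$ fall into $49/7=7$ orbits mapping to $7$ distinct members of $\mathcal{B}'$, giving $\lambda'=7$, so $\mathcal{D}'$ is a $2$-$(73,7,7)$ design. To prove the fibre is an orbit I would coordinatize: writing $L_i=\F_8 w_i$ for the seven points of $\pi$ and $v_i=\mu_i w_i$ for the point of $B$ on $L_i$, the requirement that $\{0,v_1,\dots,v_7\}$ be $\F_2$-closed becomes, along each Fano line $\{L_i,L_j,L_k\}$ of $\pi$, a multiplicative relation tying $\mu_i,\mu_j,\mu_k$ through the $\F_8$-structure constants of that line. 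Because a projective plane is connected, these relations pin down $(\mu_1,\dots,\mu_7)\in(\F_8^*)^7$ up to one global scalar; as the block $B$ itself is a solution, the entire solution set is $\{\mu B:\mu\in\F_8^*\}$. This rigidity, where the $\F_8$-linear structure (not mere counting) is indispensable, is the step I expect to be the main obstacle.

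It remains to handle the automorphism group and the orbit count. The quotient $W=G/R\cong\mathbb{Z}_{73}$ acts faithfully and regularly on $\Sigma=V'$, being the Singer cycle of $\mathrm{PG}(2,8)$, and it preserves $\mathcal{B}'$ since $w\cdot\pi_B=\pi_{gB}$ whenever $g\in G$ induces $w$, with $gB\in\mathcal{B}\setminus\Sigma$; this yields (1). For (2), the parameters force $b'=\frac{7\binom{73}{2}}{\binom{7}{2}}=876=12\cdot 73$. As $73$ is prime and no $7$-subset of the $73$-point set $V'$ can be $W$-invariant (a $W$-invariant subset is empty or all of $V'$), every $W$-orbit on $\mathcal{B}'$ has length $73$, so $\mathcal{B}'$ splits into exactly $876/73=12$ orbits.
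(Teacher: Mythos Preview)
Your proof is correct and follows the same overall architecture as the paper: identify the subgroup $R=C\cong\F_8^*$ of $G$, show $B\mapsto\pi_B$ is exactly $7$-to-$1$ with fibres the $R$-orbits, count the $49$ blocks of $\mathcal{D}$ meeting a given pair $X,Y\in\Sigma$, divide by $7$ to get $\lambda'=7$, and then use $|W|=73$ prime and $b'=876$ to obtain the orbit count.

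The one substantive difference is how the crux (``the fibre over $\pi$ is a single $R$-orbit'') is proved. The paper observes that the seven spread planes $X_1,\dots,X_7$ and the seven blocks $B^C$ are precisely the two rulings of a Segre variety $\mathcal{S}_{2,2}\cong\mathrm{PG}(2,2)\times\mathrm{PG}(2,2)$ in $\mathrm{PG}(8,2)$, and then invokes Hirschfeld--Thas (Theorem~25.5.11) to conclude that any plane transversal to the $X_i$'s already lies in the second ruling. Your coordinatisation argument is in effect a self-contained reproof of that special case: writing $v_i=\mu_i w_i$, the $\F_2$-closure of $B$ forces, on each collinear triple $\{L_i,L_j,L_k\}$ of $\pi$, the relation $\mu_i w_i+\mu_j w_j+\mu_k w_k=0$ over $\F_8$, whence the ratios $\mu_i/\mu_j$ are determined by the structure constants of $\pi\subset\mathrm{PG}(2,8)$; any second solution $(\mu_i')$ then satisfies $\mu_i'/\mu_i=\mu_j'/\mu_j$ on every line, hence globally, giving $B'=\mu B$. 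This is more elementary and keeps the argument internal to the paper, whereas the Segre-variety route is shorter to state and places the phenomenon in its natural geometric context (and would generalise more readily to other $q$). One small point you leave implicit but which is needed: the Fano line structure on $\pi$ used in your relations is intrinsic to $\pi$ as a subplane of $\mathrm{PG}(2,8)$ and does not depend on which preimage $B$ one starts from, so the same structure constants apply to any $B'$ with $\pi_{B'}=\pi$.
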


\begin{proof}
	It follows from its definition that $\mathcal{D}^\prime$ has $v^\prime = 73$ points, and each block of $\mathcal{D}^\prime$ has $k^\prime = 7$ points. Clearly, $\pi_{B^\prime} = \pi_{B}$ for each $B^\prime \in B^{C}$, where $C$ is the subgroup of order $7$ of $G$. Conversely, suppose that $\pi_{B^\prime} = \pi_{B}$. Let $X_i \in \Sigma$, $i = 1, \ldots, 7$, be such that $\lvert B \cap X_i \rvert = 1$. Then $\lvert B^\prime \cap X_i \rvert = 1$ for each $i = 1, \ldots, 7$. Hence, $\left\{X_1, X_2, \ldots, X_7\right\}$ and $B^{C}$ are the two systems of a Segre variety $\mathcal{S}_{2,2}$ of $\mathrm{PG}(8,2)$, by \cite[Section 25.5]{HT}. Therefore, $B^\prime \in B^{C}$ by \cite[Theorem 25.5.11]{HT}, since $\pi{B^\prime} = \pi_{B}$. Thus, $\pi_{B^\prime} = \pi_{B}$ if and only if $B^\prime \in B^{C}$.
	
	Let $X, Y$ be any two distinct points of $\mathcal{D}^\prime$. Then they correspond to two disjoint copies of $\mathrm{PG}(2,2)$ inside $\mathrm{PG}(8,2)$. Now, for any $x \in X$ and $y \in Y$, there exists a unique block $B$ of $\mathcal{D}$ containing them, since $\mathcal{D}$ is a $2$-design with $\lambda = 1$. Moreover, $B \cap X = \{x\}$ and $B \cap Y = \{y\}$. Therefore, the number of blocks intersecting $X$ and $Y$ in a point is $49$. 
	
	On the other hand, each element of $(B^\prime)^{C}$ intersects $X$ and $Y$ in a point, and these correspond to the same element $\pi_{B^\prime}$ of $\mathcal{B}^\prime$. Thus, the number of blocks of $\mathcal{D}^\prime$ intersecting $X$ and $Y$ is precisely $7$. Hence, $\mathcal{D}^\prime$ is a $2$-$(73,7,7)$ design.
	
	Clearly, $W$ is a point-regular automorphism group of $\mathcal{D}^\prime$, and we obtain (1). Furthermore, $W \cong \mathbb{Z}_{73}$ acts semiregularly on $\mathcal{B}^\prime$. Hence, (2) holds true since 
	\[
	\lvert \mathcal{B}^\prime \rvert = \frac{\lvert \mathcal{B} \rvert - \lvert \Sigma \rvert}{7}
	= \frac{6205 - 73}{7}
	= 876 = 12 \cdot 73.
	\]
\end{proof}

\begin{lemma}
	\label{digest}If $\pi ,\pi ^{\prime }\in \mathcal{B}^{\prime }$ are such $%
	\pi \cap \ell =\pi ^{\prime }\cap \ell $ with $\lvert \pi \cap \ell
	\rvert =3$, then $\pi =\pi ^{\prime }$.
\end{lemma}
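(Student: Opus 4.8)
The plan is to reduce the claim to the orbit criterion already proved in Lemma~\ref{LemJedan}, namely that for blocks $B,B'\in\mathcal{B}\setminus\Sigma$ one has $\pi_{B'}=\pi_{B}$ if and only if $B'\in B^{C}$, where $C\le G$ is the subgroup of order $7$. Accordingly I write $\pi=\pi_{B}$, $\pi'=\pi_{B'}$ and regard $\ell$ as a line of $\mathrm{PG}(2,8)$, that is, as one of the $5$-dimensional subspaces of $\mathrm{PG}(8,2)$ spanned by two elements of $\Sigma$. Put $\pi\cap\ell=\pi'\cap\ell=\{X_1,X_2,X_3\}$. The first step is to observe that this triple is a line of the Fano subplane $\pi$ (and likewise of $\pi'$): the unique line of $\mathrm{PG}(2,8)$ through $X_1,X_2$ is $\ell$, and it contains the subplane-line of $\pi$ joining $X_1,X_2$; since that subplane-line has three points and lies on $\ell$, it must equal $\{X_1,X_2,X_3\}$.

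Translating back into $\mathrm{PG}(8,2)$, this subplane-line corresponds, by the very definition of $\pi_{B}$, to the line $t:=B\cap\ell$ of $B$, whose three points are exactly $x_i:=B\cap X_i$; thus $t$ is a \emph{transversal}, meeting each of $X_1,X_2,X_3$ in a single point. The same applies to $B'$, yielding a transversal $t':=B'\cap\ell$.

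The crux is to determine all such transversals together with the action of $C$ on them. Here I would use the field structure: $C$ is the unique subgroup of order $7$ of the Singer cycle $G=\F_{2^9}^*$, hence equals $\F_{8}^*$ for the subfield $\F_8\subset\F_{2^9}$, and it acts as multiplication by $\F_8^*$; viewing $\F_{2^9}$ as $\F_8^3$, the spread $\Sigma$ is the set of $\F_8$-lines, so $C$ fixes every element of $\Sigma$ and every line $\ell$, acting regularly on the $7$ nonzero vectors inside each spread element. Identifying $\ell$ with $\F_8^2$, a short computation (writing $x_i=\lambda_i u_i$ with $u_1+u_2+u_3=0$ and solving $x_1+x_2+x_3=0$ over $\F_2$, which forces $\lambda_1=\lambda_2=\lambda_3$) shows that there are exactly $7$ transversals of $X_1,X_2,X_3$ and that $C$ permutes them regularly. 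I expect this regulus/transversal analysis to be the part that needs the most care.

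Given this, the conclusion is immediate. By regularity there is a unique $c\in C$ with $t^{c}=t'$, and then $B^{c}$ contains $t^{c}=t'$, so $B^{c}$ and $B'$ share the three collinear points of $t'$. Since $\mathcal{D}$ is a Steiner system ($\lambda=1$), two distinct blocks meet in at most one point, whence $B'=B^{c}\in B^{C}$. By the criterion of Lemma~\ref{LemJedan} this gives $\pi'=\pi_{B'}=\pi_{B}=\pi$, completing the proof (the case $t=t'$ being simply $c=1$).
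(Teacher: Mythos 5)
Your proposal is correct and follows essentially the same route as the paper's proof: your computation that the transversals of $X_1,X_2,X_3$ are exactly the seven $\mathbb{F}_8^*$-multiples $\lambda\{u_1,u_2,u_3\}$ (forcing $\lambda_1=\lambda_2=\lambda_3$) is precisely the paper's step showing $i=j=h$ for the exponents of $\varphi$, and both arguments then conclude $B'=B^{c}$ from $\lambda=1$ and invoke $\pi_{B^{c}}=\pi_{B}$. The only difference is presentational, namely your packaging of the scalar computation as a regular action of $C$ on the set of transversal lines.
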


\begin{proof}
	
	Let $\pi , \pi ^{\prime } \in \mathcal{B}^{\prime }$ be such that $\pi \cap \ell = \pi ^{\prime } \cap \ell$ with $\lvert \pi \cap \ell \rvert = 3$. Then there exist two blocks of $\mathcal{D}$, say $B = \left\langle e_{1}, e_{2}, e_{3} \right\rangle^{\ast}$ and $B^{\prime} = \left\langle e_{1}^{\prime}, e_{2}^{\prime}, e_{3}^{\prime} \right\rangle^{\ast}$, with 
	\[
	B \cap \left\langle e_{i} \right\rangle_{\mathbb{F}_{8}}^{\ast} = \left\langle e_{i} \right\rangle_{\mathbb{F}_{2}}^{\ast}, \quad 
	B \cap \left\langle e_{1} + e_{2} \right\rangle_{\mathbb{F}_{8}}^{\ast} = \left\langle e_{1} + e_{2} \right\rangle_{\mathbb{F}_{2}}^{\ast},
	\]
	and 
	\[
	B^{\prime} \cap \left\langle e_{i} \right\rangle_{\mathbb{F}_{8}}^{\ast} = \left\langle e_{i}^{\prime} \right\rangle_{\mathbb{F}_{2}}^{\ast}, \quad 
	B^{\prime} \cap \left\langle e_{1} + e_{2} \right\rangle_{\mathbb{F}_{8}}^{\ast} = \left\langle e_{1}^{\prime} + e_{2}^{\prime} \right\rangle_{\mathbb{F}_{2}}^{\ast}.
	\]
	Then there exist integers $i, j, h$ such that 
	\[
	e_{1}^{\prime} = e_{1}^{\varphi^{i}}, \quad 
	e_{2}^{\prime} = e_{2}^{\varphi^{j}}, \quad 
	e_{1}^{\prime} + e_{2}^{\prime} = (e_{1} + e_{2})^{\varphi^{h}},
	\]
	where $\langle \varphi \rangle = C$, since $\langle e_{1} \rangle_{\mathbb{F}_{8}}^{\ast}$, $\langle e_{2} \rangle_{\mathbb{F}_{8}}^{\ast}$, and $\langle e_{1} + e_{2} \rangle_{\mathbb{F}_{8}}^{\ast}$ are distinct $C$-orbits. 
	
	Then $e_{1}^{\varphi^{i}} + e_{2}^{\varphi^{j}} = e_{1}^{\varphi^{h}} + e_{2}^{\varphi^{h}}$, since $\varphi$ is linear, and hence 
	\[
	e_{1}^{\varphi^{i - h}} + e_{1} = e_{2}^{\varphi^{j - h}} + e_{2},
	\]
	which forces $\langle e_{1} \rangle_{\mathbb{F}_{8}}^{\ast} = \langle e_{2} \rangle_{\mathbb{F}_{8}}^{\ast}$, whereas $\langle e_{1} \rangle_{\mathbb{F}_{8}}^{\ast}$ and $\langle e_{2} \rangle_{\mathbb{F}_{8}}^{\ast}$ are distinct $C$-orbits. Therefore $h = i = j$, and hence $\{ e_{1}^{\prime}, e_{2}^{\prime} \} \subset B^{\prime} \cap B^{\varphi^{h}}$. 
	
	Then $B^{\varphi^{h}} = B^{\prime}$, since $\mathcal{D}$ is a $2$-design with $\lambda = 1$, and so $\pi = \pi^{\prime}$.

\end{proof}

\begin{definition}
	Let $\ell $ be any line of $\mathrm{PG}(2,8)$ and $\mathcal{O}$ be any block-$W$-orbit of $\mathcal{D}^{\prime }$. The set
	\[
	I(\mathcal{O},\ell) = \left\{ \pi \cap \ell \mid \lvert \pi \cap \ell \rvert = 3, \, \pi \in \mathcal{O} \right\}
	\] 
 	is called the \emph{imprint of }$\mathcal{O}$ on $\ell $.
\end{definition}

\begin{corollary}
	\label{CorJedan}Let $\ell $ be any line of $\mathrm{PG}(2,8)$ and $\mathcal{O}$ be
	any block-$W$-orbit of $\mathcal{D}^{\prime }$. Then $\lvert I(\mathcal{O},\ell )\rvert =7$.
\end{corollary}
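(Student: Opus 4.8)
The plan is to reduce the statement to a block count and then promote an averaging argument to a uniform value by using the symmetry of $W$. By Lemma~\ref{digest} the map $\pi \mapsto \pi \cap \ell$ is injective on the set $\{\pi \in \mathcal{O} : \lvert \pi \cap \ell \rvert = 3\}$, so that
\[
\lvert I(\mathcal{O},\ell) \rvert = \lvert \{\pi \in \mathcal{O} : \lvert \pi \cap \ell \rvert = 3\} \rvert .
\]
Hence it suffices to show that exactly seven blocks of $\mathcal{O}$ meet $\ell$ in three points.

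I would next count, for a fixed block $\pi$, the lines of $\mathrm{PG}(2,8)$ meeting it in three points. Since $\pi$ is a copy of $\mathrm{PG}(2,2)$ embedded in $\mathrm{PG}(2,8)$, the standard subplane intersection property gives $\lvert \pi \cap \ell' \rvert \in \{0,1,3\}$ for every line $\ell'$, the value $3$ occurring exactly when $\ell'$ is the line of $\mathrm{PG}(2,8)$ spanned by one of the seven lines of the Fano subplane $\pi$. Two distinct lines of $\pi$ cannot span the same line $\ell'$, since otherwise $\ell'$ would contain their union and hence meet $\pi$ in at least five points. Therefore each block $\pi$ is met in three points by exactly seven lines of $\mathrm{PG}(2,8)$.

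Finally I would combine a double count with the action of $W$ on lines. Recalling from Lemma~\ref{LemJedan} that $\lvert \mathcal{O} \rvert = 73$, the number of pairs $(\pi,\ell')$ with $\pi \in \mathcal{O}$ and $\lvert \pi \cap \ell' \rvert = 3$ equals $7 \cdot 73$. Now $W \cong \mathbb{Z}_{73}$ acts regularly on the $73$ points of $\mathrm{PG}(2,8)$, and since $73$ is a prime exceeding $9$ no non-identity element can fix a line (it would then fix each of its nine points, contradicting point-regularity); thus $W$ is a Singer group acting regularly on the $73$ lines as well. As $\mathcal{O}$ is $W$-invariant and $W$ preserves intersection cardinalities, the number $\lvert \{\pi \in \mathcal{O} : \lvert \pi \cap \ell' \rvert = 3\} \rvert$ is the same for every line $\ell'$; dividing the total $7 \cdot 73$ equally among the $73$ lines forces this common value to be $7$, and together with the first step this gives $\lvert I(\mathcal{O},\ell) \rvert = 7$. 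The step I expect to demand the most care is establishing the line-transitivity of $W$, which is precisely what converts the global average into the exact value $7$ for each individual line; the subplane intersection count is routine and the essential injectivity is already supplied by Lemma~\ref{digest}.
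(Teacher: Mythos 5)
Your proof is correct and follows essentially the same route as the paper: each block of $\mathcal{O}$ is secant to exactly seven lines, the orbit symmetry of $W$ transfers this to exactly seven secant blocks per line, and Lemma~\ref{digest} supplies the injectivity of $\pi\mapsto\pi\cap\ell$. The only difference is cosmetic: where you establish line-transitivity of $W$ and carry out the double count explicitly, the paper cites Dembowski's result that an incidence structure whose point set and block set are orbits of the same group of size $73$ is a symmetric tactical configuration.
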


\begin{proof}
	The incidence structure $(\mathcal{O}, \mathcal{B})$, with incidence relation given by the pairs $(\pi, m) \in \mathcal{O} \times \mathcal{B}$ such that $\lvert \pi \cap m \rvert = 3$, is a symmetric $1$-design by \cite[1.2.6]{Demb}, since both $\mathcal{O}$ and $\mathcal{B}$ are $W$-orbits of size $73$. Hence,
	\[
	\left\lvert \left\{ \pi \mid \lvert \pi \cap \ell \rvert = 3, \ \pi \in \mathcal{O} \right\} \right\rvert = 7,
	\]
	since the number of lines of $\mathrm{PG}(2,8)$ secant to $\pi$ is $7$. 
	
	If $\lvert I(\mathcal{O}, \ell) \rvert < 7$, then there exist $\pi_{1}, \pi_{2} \in I(\mathcal{O}, \ell)$ such that $\pi_{1} \cap \ell = \pi_{2} \cap \ell$ with $\lvert \pi_{1} \cap \ell \rvert = 3$, and hence $\pi_{1} = \pi_{2}$ by Lemma \ref{digest}. Therefore,
	\[
	7 \leq \lvert I(\mathcal{O}, \ell) \rvert \leq \left\lvert \left\{ \pi \mid \lvert \pi \cap \ell \rvert = 3, \ \pi \in \mathcal{O} \right\} \right\rvert = 7,
	\]
	and hence $\lvert I(\mathcal{O}, \ell) \rvert = 7$.
\end{proof}

\begin{definition}
	Let $\ell$ be any line of $\mathrm{PG}(2,8)$. A family $\mathcal{F}$ of copies of $\mathrm{PG}(2,2)$ provides a \emph{perfect cover of} $\ell$ if 
	\[
	\left\{ \pi \cap \ell \mid \lvert \pi \cap \ell \rvert = 3, \, \pi \in \mathcal{F} \right\}
	\]
	is the set of all sublines $\mathrm{PG}(1,2)$ of $\ell$.
\end{definition}

\begin{theorem}
	\label{TeoJedan}Let $\mathcal{O}_{j}$, $j=1,\ldots,12$, be the $W$-orbits on $\mathcal{B}^{\prime }$. For each line $\ell $ of $\mathrm{PG}(2,8)$, the set  
	\[
	\bigcup_{j=1}^{12}I(\mathcal{O}_{j},\ell )
	\]%
	provides a perfect cover of $\ell $.
\end{theorem}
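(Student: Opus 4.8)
The plan is to prove this by a counting argument in which the relevant quantities are forced to match on the nose. First I would count the total number of $\mathrm{PG}(1,2)$-sublines of a fixed line $\ell$. Since $\ell$ is a copy of $\mathrm{PG}(1,8)$ it has $9$ points, and because $\mathrm{PGL}(2,8)$ is sharply $3$-transitive every $3$-subset of these $9$ points is the point-set of a subline; hence $\ell$ carries exactly $\binom{9}{3}=84$ sublines $\mathrm{PG}(1,2)$. I would also record that every set of the form $\pi\cap\ell$ with $\lvert\pi\cap\ell\rvert=3$ and $\pi\in\mathcal{B}^{\prime}$ is indeed such a subline, being a line of the $\mathbb{F}_2$-subplane $\pi$; so all twelve imprints live inside this set of $84$ sublines.

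Next I would assemble the contributions of the twelve orbits $\mathcal{O}_1,\dots,\mathcal{O}_{12}$ of $\mathcal{B}^{\prime}$ under $W$ (cf. Lemma \ref{LemJedan}). By Corollary \ref{CorJedan} each imprint $I(\mathcal{O}_j,\ell)$ consists of exactly $7$ sublines of $\ell$, so that $\sum_{j=1}^{12}\lvert I(\mathcal{O}_j,\ell)\rvert = 12\cdot 7 = 84$. The crucial point is that these twelve imprints are pairwise disjoint. Indeed, if a subline $s$ of $\ell$ belonged to both $I(\mathcal{O}_j,\ell)$ and $I(\mathcal{O}_{j'},\ell)$, there would exist $\pi\in\mathcal{O}_j$ and $\pi^{\prime}\in\mathcal{O}_{j'}$ with $\pi\cap\ell=\pi^{\prime}\cap\ell=s$ and $\lvert\pi\cap\ell\rvert=3$; Lemma \ref{digest} would then force $\pi=\pi^{\prime}$, and since distinct $W$-orbits are disjoint this gives $\mathcal{O}_j=\mathcal{O}_{j'}$. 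Hence the union $\bigcup_{j=1}^{12} I(\mathcal{O}_j,\ell)$ is a disjoint union and has cardinality exactly $12\cdot 7 = 84$.

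Finally, this union is a set of $84$ distinct sublines contained in the set of all $84$ sublines of $\ell$, so the two sets coincide; this is precisely the assertion that the twelve imprints provide a perfect cover of $\ell$. I expect no real obstacle in this concluding step, since it is pure counting: the entire weight of the argument has already been discharged into Lemma \ref{digest}, where the disjointness (and the hypotheses $\lambda=1$ together with the $\varphi$-linearity input) is used, and into Corollary \ref{CorJedan}, which pins each imprint to size $7$. The one thing I would check explicitly is the numerical coincidence $12\cdot 7=\binom{9}{3}=84$: it is exactly this equality that upgrades ``disjoint union of the correct total size'' into ``covers every subline'', and it is what makes the perfect cover hold on the nose rather than leaving a gap.
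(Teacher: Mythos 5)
Your proposal is correct and follows essentially the same route as the paper: size $7$ for each imprint via Corollary \ref{CorJedan}, pairwise disjointness via Lemma \ref{digest}, and the count $12\cdot 7=84$ matching the total number of $\mathrm{PG}(1,2)$-sublines of $\ell$ (the paper computes this $84$ as $\lvert \mathrm{PSL}_{2}(8):\mathrm{PSL}_{2}(2)\rvert$ rather than as $\binom{9}{3}$, but both are valid here since every $3$-subset of $\mathrm{PG}(1,8)$ is a subline).
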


\begin{proof}
	It follows from Corollary \ref{CorJedan} that $\lvert I(\mathcal{O}_{j},\ell) \rvert = 7$ for each $j = 1, \dots, 12$. Therefore,
	\[
	\left\lvert \bigcup_{j=1}^{12} I(\mathcal{O}_{j},\ell) \right\rvert \leq 84.
	\]
	
	If $I(\mathcal{O}_{j_{1}},\ell) \cap I(\mathcal{O}_{j_{2}},\ell) \neq \emptyset$ for some distinct $1 \leq j_{1}, j_{2} \leq 12$, then there exist $\pi_{1} \in \mathcal{O}_{j_{1}}$ and $\pi_{2} \in \mathcal{O}_{j_{2}}$ with $\lvert \pi_{1} \cap \ell \rvert = 3$ such that $\pi_{1} \cap \ell = \pi_{2} \cap \ell$. Hence, $\pi_{1} = \pi_{2}$ by Lemma \ref{digest}, so $\mathcal{O}_{j_{1}} = \mathcal{O}_{j_{2}}$, which is a contradiction since these are distinct $W$-orbits. 
	
	Thus, the sets $I(\mathcal{O}_{j},\ell)$ are pairwise disjoint, and hence
	\[
	\left\lvert \bigcup_{j=1}^{12} I(\mathcal{O}_{j},\ell) \right\rvert = 84.
	\]
	
	On the other hand, the set of all sublines $\mathrm{PG}(1,2)$ of $\ell$ has size $\lvert \mathrm{PSL}_{2}(8) : \mathrm{PSL}_{2}(2) \rvert = 84$, and hence $\bigcup_{j=1}^{12} I(\mathcal{O}_{j},\ell)$ provides a perfect cover of $\ell$.
\end{proof}

We implemented an algorithm that uses Theorem \ref{TeoJedan} to show
the non-existence of $\mathcal{D}^{\prime }$ and hence it provides a proof of Theorem \ref{(9,3,1)_2}. We operate as follows.

\subsubsection{Description of the algorithm}

This algorithm tests whether there exist twelve distinct orbits
$\mathcal{O}_i$ of blocks in $\mathrm{PG}(2,8)$ under the action of the group
$W \cong \mathbb{Z}_{73}$ such that their imprints $I(\mathcal{O}_i,\ell)$ on a fixed line
$\ell$ are pairwise disjoint.

The procedure has two phases. Phase~A constructs  $n\le 7$ such
orbits; Phase~B searches for the remaining $12-n$, if they exist. The entire algorithm has been implemented and executed using a Python program.

$\bullet$ \textbf{Phase~A:} We first fix the line
$\ell=\{1,2,35,37,42,45,47,54,63\}$ and its seven $3$-point \emph{sublines}
$s_i$ of the form $\{1,2,x\}$, for $i=1,\dots,7$.
All orbits are precomputed (via GAP \cite{Gap}) and loaded from disk. For each orbit $\mathcal{O}$ we
compute its imprint on $\ell$, thereby forming a list
$L$ of pairs $(\mathcal{O}, I(\mathcal{O},\ell))$.
We then define $7$ new lists as follows:
\begin{equation*}
	L_i=\bigl\{\,\big(\mathcal{O}, I (\mathcal{O},\ell) \big)\in L \ \bigm|\ s_i\in I(\mathcal{O},\ell)\,\bigr\},
	\qquad i=1,\dots,7.
\end{equation*}
Each $L_i$ has $105$ elements (clearly, the $L_i$'s are not mutually disjoint).
We iterate over $L_1$ with an outer for-loop, fixing an element
$(\mathcal{O}_1,I_1)\in L_1$. The goal is to filter the remaining sets
$L_2,\dots,L_7$ to those orbits whose imprints are disjoint from $I_1$.
Proceeding in the fixed order $L_2,L_3,\ldots$ may prematurely eliminate
feasible solutions: a priori we do not know whether a supposed solution present,
for example, $s_1$ and $s_2$ in the imprint of the same  orbit. If this happen, then
$L_2$ filtered by disjointness from $I_1$ will be empty and the loop would
stop without further exploring this option.

Therefore, after fixing $(\mathcal{O}_1,I_1)$ we filter \emph{each} of
$L_2,\dots,L_7$, producing six lists
\begin{equation*}
	\bigl\{\,\big(\mathcal{O}, I (\mathcal{O},\ell) \big)\in L_i \ \bigm|\ I(\mathcal{O},\ell)\cap I_1=\emptyset\,\bigr\},
	\qquad i=2,\dots,7.
\end{equation*}
Among these we select the one of maximum size and call it $L_2'$; we denote by
$i_2$ the index from which it originates (i.e., $L_2'$ is $L_{i_2}$ filtered).
If two different lists have maximum size, then we fix the first one encountered. 

If $L_2'\neq\emptyset$, we iterate over
$(\mathcal{O}_2,I_2)\in L_2'$. Next, for the remaining five lists with
$i\in\{2,\dots,7\}\setminus\{i_2\}$ we filter by disjointness from both $I_1$
and $I_2$,
\begin{equation*}
	\bigl\{\,\big(\mathcal{O}, I (\mathcal{O},\ell) \big)\in L_i \ \bigm|\ I(\mathcal{O},\ell)\cap I_j=\emptyset,\ j=1,2\,\bigr\},
\end{equation*}
and select the largest filtered list $L_3'$ with its index $i_3$. 

If $L_3'\neq\emptyset$, we fix $(\mathcal{O}_3,I_3)\in L_3'$ and repeat. 

At step $k$, we iterate 
$(\mathcal{O}_k,I_k)\in L_k'$. For the  remaining $7-k$ lists with
$i\in\{2,\dots,7\}\setminus\{i_2,i_3,\dots,i_k\}$, we filter by disjointness from all $I_1$, $I_2$,\dots, $I_k$, creating the following new lists:
\begin{equation*}
	\bigl\{\,\big(\mathcal{O}, I (\mathcal{O},\ell) \big)\in L_i \ \bigm|\ I(\mathcal{O},\ell)\cap I_j=\emptyset,\ j=1,2,\dots,k\,\bigr\},
\end{equation*}
We select the largest filtered list $L_{k+1}'$ with its index $i_{k+1}$. If
$L_{k+1}'\neq\emptyset$, we fix $(\mathcal{O}_{k+1},I_{k+1})\in L_{k+1}'$ and repeat.

Two outcomes are possible: either we reach $k=7$ or the current maximal
filtered list $L_n'$ is empty (and hence all other filtered lists are empty,
since $L_n'$ has maximal cardinality). In the first case, each $s_i$ lies in a
different imprint $I_j$. In the second case, at least two sublines $s_i,s_j$
appear in the same imprint. In both cases, the sublines $s_1,\dots,s_7$ have
been covered as required.

$\bullet$ \textbf{Phase~B:} We then call a subroutine that attempts to build the remaining $12-n$ orbits.
Its input is a partial family
$\{(\mathcal{O}_1,I_1),\dots,(\mathcal{O}_n,I_n)\}$ of already constructed orbits and
their imprints.

The subroutine proceeds as follows. First, it forms
$L_{\text{partial}}\subseteq L$, consisting of all pairs $(\mathcal{O},I(\mathcal{O},\ell))$ whose
imprints are disjoint from every $I_i$, $1\le i\le n$.
If $\lvert L_{\text{partial}}\rvert$ is strictly smaller than the number of
orbits still required to reach $12$, the subroutine terminates immediately
(pruning the branch).

Otherwise, it performs a depth-first search using an explicit stack. Each stack
level corresponds to selecting a new orbit, so at level $k$ the algorithm has
chosen $n+k+1$ orbits (the search starts at level $0$). At each step:
\begin{enumerate}
	\item the current candidate list is scanned sequentially;
	\item if the next candidate $(\mathcal{O},I(\mathcal{O},\ell))$ is \emph{valid} (namely, the
	filtered list obtained by enforcing disjointness has cardinality
	greater than or equal to the number of orbits still required to reach
	$12$) it is pushed on the stack together with its updated disjoint
	candidate list;
	\item if no viable candidates remain, the algorithm backtracks to the
	previous level and continues with the next unexplored element.
\end{enumerate}

If the depth reaches $11-n$, a complete configuration of twelve orbits is
found; the solution is written to file and the program terminates. This
subroutine guarantees exhaustive exploration, while the pruning rule discards
unpromising branches as early as possible. The explicit stack avoids deep
recursion and keeps the search efficient and memory-safe.

If, for every partial family $\mathcal{O}=\{(\mathcal{O}_1,I_1),\dots,(\mathcal{O}_n,I_n)\}$
generated by Phase~A, Phase~B never reaches twelve orbits, the program ends and
reports that no solution was found.

\subsection{Computational platform and runtime.}
All computations were performed on a MacBook Air (13.6-inch, 2022) equipped with an Apple M2 processor (8-core CPU, 8-core GPU), 8 GB of unified memory, and a 512 GB SSD, running macOS Sequoia.  

The algorithm completed the exhaustive search in approximately 7-8 hours of time. No complete solution was found within this runtime, which provides a proof for the non-existence of the desired design.

\section*{Acknowledgments}
The first, second, and third author thank the Italian National Group for Algebraic and Geometric Structures and their Applications (GNSAGA-INdAM) for its support to their research.


\begin{thebibliography}{99}

\bibitem{AB} R.J.R. Abel, M. Buratti, \emph{Difference families}.
Handbook of Combinatorial Designs,
Second Edition,  C.J. Colbourn and J.H. Dinitz (Editors), Chapman \&
Hall/CRC, Boca Raton, FL, 2006, 392--410.

\bibitem{BKN15} M. Braun, M. Kiermaier, A. Nakic, {\it On the automorphism group of a binary $q$-analog of the Fano plane}, 
European Journal of Combinatorics {\bf51}, 443--457 (2016).

\bibitem{qdesignscomputer2017}
M. Braun, M. Kiermaier, A. Wassermann, {\it Computational methods in subspace designs}, In: Network Coding and Subspace Designs, Signals and
Communication Technology, 213--244, Springer, (2018).

\bibitem{qdesigns2017}
M. Braun, M. Kiermaier, A. Wassermann, {\it $q$-analogs of designs: Subspace designs}, In: Network Coding and Subspace Designs, Signals and
Communication Technology, 171--211, Springer, (2018).

\bibitem{BEOVW}
M. Braun, T. Etzion, P. R. J. \"Osterg\aa rd, A. Vardy, A. Wassermann, {\it On the Existence of $q$-Analogs of Steiner Systems}. Forum of Mathematics, PI, 4 (2016).

\bibitem{BCHW}
D. Bryant, C.J. Colbourn, D. Horsley, I.M. Wanless,
{\it Steiner triple systems with high chromatic index},
SIAM J. Discrete Math. {\bf31} (2017), 2603--2611.

\bibitem{BH}
D. Bryant, D. Horsley,
{\it A second infinite family of Steiner triple systems without almost parallel classes},
J. Comb. Theory Ser. A {\bf120} (2013), 1851--1854.

\bibitem{B}
M. Buratti,
{\it Recursive constructions for difference matrices 
and relative difference families}, 
J. Combin. Des. {\bf 6}, 165--182 (1998).

\bibitem{B2} M. Buratti,
{\it Cyclotomic conditions leading to new Steiner $2$-designs}, 
Finite Fields Appl. {\bf 3}, 300--313 (1997).

\bibitem{BMaN}
M. Buratti, F. Martinovic, A. Nakic, {\it $(27, 6, 5)$ designs with a nice automorphism group},
Australas. J. Combin. {\bf92}, 80--95 (2025).

\bibitem{BMN}
M. Buratti, F. Merola, A. Nakic,
{\it Additive combinatorial designs}, Des. Codes Cryptogr. {\bf93}, 2717--2740 (2025).

\bibitem{BN0} M. Buratti, A. Nakic, {\it Designs over finite fields by difference methods}, Finite Fields Appl. {\bf57}, 128--138 (2019).

\bibitem{BN1} M. Buratti, A. Nakic,
{\it Super-regular Steiner $2$-designs}, Finite Fields Appl. {\bf85}, Article number 102116 (2023).

\bibitem{BN2} M. Buratti, A. Nakic, 
{\it Additivity of symmetric and subspace $2$-designs}, Des. Codes Cryptogr. {\bf92}, 3561--3572 (2024).

\bibitem{BNW}
M. Buratti, A. Nakic,  A. Wassermann,
{\it Graph decompositions over projective geometries},
J. Combin. Des. 29, 149--174 (2021).

\bibitem{BP} M. Buratti, A. Pasotti, {\it Heffter spaces}, Finite Fields Appl. {\bf98},  Article number 102464 (2024).

\bibitem{BZ} M. Buratti, F. Zuanni,
{\it $G$-invariantly resolvable Steiner $2$-designs which are 
$1$-rotational over $G$},
Bull. Belg. Math. Soc. {\bf 5} (1998), 221--235. 

\bibitem{BZ2} 
M. Buratti, F. Zuanni,
{\it The $1$-rotational $(52,4,1)$-RBIBD's},
J. Combin. Math. Combin. Comput. {\bf 30}, 99--102 (1999).

\bibitem{C} A. Caggegi, {\it A new family of additive designs}, Ars. Math. Contemp. {\bf23}, P2.07 (2023).

\bibitem{CF} A. Caggegi, G. Falcone, {\it On $2$-$(n^2, 2n,2n-1)$ designs with three intersection numbers}, 
Des. Codes Crypt. {\bf43}, 33--40 (2007).

\bibitem{CFP} A. Caggegi, G. Falcone, M. Pavone, {\it On the additivity of block designs}, 
J. Algebr. Comb. \textbf{45}, 271--294 (2017).

\bibitem{CFP2} A. Caggegi, G. Falcone, M. Pavone, {\it Additivity of affine designs}, 
J. Algebr. Comb. \textbf{53}, 755--770 (2012).

\bibitem{C}
 S. Cichacz, {\it Zero sum partition into sets of the same order and its applications}, Electron. J. Combin. {\bf25} (2018), $\sharp$P1.20.

\bibitem{Demb} P. Dembowski, {\it Finite Geometries}, Springer, Berlin, Heidelberg, New York, 1968.

\bibitem{Dru} K. Drudge, {\it On the orbits of Singer groups and their subgroups}, Electron. J. Combin. {\bf9}, 1, Research Paper 15, pp. 10 (2002).

\bibitem{EW}
J. Egan, I.M. Wanless, {\it Latin Squares with Restricted Transversals},
J. Combin. Des. {\bf20} (2012), 344--361. 

\bibitem{Gap} 
The GAP Group, GAP - Groups, Algorithms, and Programming, Version 4.4.12, (2008).

\bibitem{HTK} Z.S. Huerta, V. Tonchev, M. Keranen,
{\it On projective planes of order $16$ associated with $1$-rotational $2$-$(52, 4, 1)$ designs},
https://arxiv.org/abs/2403.03189


\bibitem{HT} J.W.P. Hirschfeld, J.A. Thas, {\it General Galois Geometries}. Oxford University Press, Oxford (1991).

\bibitem{K} P. Keevash, {\it The existence of designs}. Preprint,
arXiv:1401.3665.

\bibitem{KKW16}
M. Kiermaier, S. Kurz, A. Wassermann, {\it The order of the automorphism group of a binary $q$-analog of the Fano plane is at most two},
Des. Codes Cryptogr. {\bf86}, 239--250 (2018)

\bibitem{knuth:fasc5}
Knuth, D.E.: Introduction to Backtracking, The Art of Computer Programming, Vol. 4: Combinatorial Algorithms,
Fascicle 5. Addison-Wesley (2020)

\bibitem{knuth:fasc7}
Knuth, D.E.: Constraint Satisfaction, The Art of Computer Programming, Vol. 4: Combinatorial Algorithms,
Fascicle 7. Addison-Wesley (2025)

\bibitem{FP}
G. Falcone, M. Pavone, {\it Binary Hamming codes and Boolean designs}, Des. Codes Cryptogr. \textbf{89}, 1261--1277 (2021)

\bibitem{KramerMesner:76} 
E.~S. Kramer, D.~M. Mesner, {\it $t$-designs on hypergraphs}, Discrete Mathematics {\bf15}, 263--296 (1976)

\bibitem{N}
A. Nakic, {\it The first example of a simple $2-(81,6,2)$ design}, Ex. Counterexamples {\bf1}, Article number 100005 (2021)

\bibitem{PD}  A. Pasotti, J.H. Dinitz,
{\it A survey of Heffter arrays},
Fields Institute Communications {\bf86}, 353--392 (2024)

\bibitem{P}
M. Pavone, {\it A quasidouble of the affine plane of order $4$ and the solution of a problem on additive designs},
Finite Fields Appl. {\bf92}, Article number 102277 (2023) 

\bibitem{T}
S. Thomas, {\it Designs over finite fields}, Geom. Dedicata {\bf24}, 237--242 (1987)

\bibitem{WW}
I.M. Wanless, B.S. Webb, {\it The existence of latin squares without orthogonal mates},
Des. Codes Cryptogr. {\bf40},  131--135 (2006)

\bibitem{Zhang}
M.~Zhang, T.~Feng, X.~Wang, {\it The existence of cyclic $(v, 4, 1)$-designs}, Des. Codes Crypt. {\bf 90}, 1611--1628 (2022)

\bibitem{Zhao}
C. Zhao, B. Zhao, Y. Chang, T. Feng, X. Wang, M. Zhang,
{\it Cyclic relative difference families with block size four and their applications}, 
J. Comb. Theory Ser. A {\bf206},  Article number 105890 (2024)


\end{thebibliography}
\end{document}